\newtheorem{theorem}{Theorem}[section]
\newtheorem{corollary}[theorem]{Corollary}
\newtheorem{lemma}[theorem]{Lemma}
\newtheorem{conjecture}[theorem]{conjecture}
\newtheorem{problem}[theorem]{Problem}
\newtheorem{proposition}[theorem]{Proposition}
\newtheorem{remark}[theorem]{Remark}
\newenvironment{proof}[1][Proof]{\textbf{#1.} }{\ \rule{0.5em}{0.5em}}
\begin{document}

\title{On the symmetric doubly stochastic matrices that are determined by their spectra}

\author[rvt]{Bassam Mourad\corref{cor1}\fnref{fn1}}
\ead{bmourad@ul.edu.lb}
\author[focal]{ Hassan Abbas}
\cortext[cor1]{Corresponding author}
 \fntext[fn1]{ Fax:+961 7 768174.}
\address[rvt]{Department of Mathematics, Faculty of Science V, Lebanese University, Nabatieh,
Lebanon}
\address[focal]{  Department of
Mathematics,  Faculty of Science I, Lebanese University, Beirut, Lebanon}

\begin{abstract}
A symmetric doubly stochastic matrix $A$ is said to be determined by its spectra if the only symmetric doubly stochastic matrices that are similar to $A$ are
of the form $P^TAP$ for some permutation matrix $P.$ The problem of characterizing such matrices is considered here. An ``almost'' the same but a more difficult problem was proposed by [ M. Fang, A note on the inverse eigenvalue problem for symmetric doubly stochastic matrices, Lin. Alg. Appl., 432 (2010) 2925-2927] as follows: ``Characterize all the $n$-tuples $\lambda=(1,\lambda_2,...,\lambda_n)$ such that up to a permutation similarity, there exists a unique symmetric doubly stochastic matrix with spectrum $\lambda.$'' In this short note, some general results concerning our two problems are first obtained. Then, we completely solve these two problems for the case $n=3.$ Some connections with spectral graph theory are then studied. Finally, concerning the general case, two open questions are posed and a conjecture is introduced.
  \end{abstract}

\begin{keyword} Doubly stochastic matrices \sep{ Inverse eigenvalue problem, Spectral characterization}
\MSC{15A12, 15A18, 15A51, 05C50}
\end{keyword}

\maketitle

\section{Introduction}

An $n\times n$ real matrix $A$ having each row and column sum equal to 1 is called {\it doubly quasi-stochastic.}
 If, in addition, $A$ is nonnegative then $A$
  is said to be {\it doubly stochastic. }  The set of all $n\times n$ doubly-stochastic matrices is denoted by $\Delta_n$ and the set of all the symmetric
 elements in $\Delta_n$ will be denoted by $\Delta^s_n.$ For $0\leq a \leq n,$ denote by $\Delta^s_n(a)$ to be the set of all elements of $\Delta^s_n$ with trace $a.$

 Three particular elements of $\Delta^s_n$ are of interest to us. The first is $I_n$ which is the $n\times n$ identity matrix and the second  $J_n$ which is the $n\times n$ matrix whose all entries are $\frac{1}{n}.$ The third is  $C_n$ which denotes the $n\times n$ matrix whose diagonal entries are all zeroes and whose off-diagonal entries are equal to $\frac{1}{n-1}$ (here for $n\geq 2$). In addition, let $e_{n}=\frac{1}{\sqrt{n}}(1,1,...,1)^T\in \mathbb{R}^n$ where $\mathbb{R}$ denotes the real line. For two matrices (and in particular for row vectors) $A$ and $B,$ the line-segment joining them is denoted by $[A,B],$ and for any two sets $E$ and $F$ we write $E-F$ to denote the set of elements in $E$ which are not in $F.$

Note that it is clear from the definition that an $n\times n$ real matrix $A$ is doubly quasi-stochastic if and only if $Ae_n=e_n$ and $e_n^TA=e_n^T$ if and only if $AJ_n=J_nA=J_n.$

  The {\it symmetric doubly stochastic inverse eigenvalue problem} asks which
sets of $n$ real numbers occur as the spectrum of an $n\times n$
symmetric doubly stochastic matrix. For more information on this problem see, e.g., \cite{fa,hw,jo,ka,mo,mou,mour,moura,mourad,mourada}.

  Regarding the inverse eigenvalue problem for symmetric doubly stochastic matrices, the following ``inaccurate'' proposition was presented in \cite{hw}.\\

 \begin{proposition} Let $\lambda=(1,\lambda_2,...,\lambda_n)$ be in $\mathbb{R}^n$ with $1> \lambda_2\geq ...\geq \lambda_n\geq -1.$ If
  $$\frac{1}{n}+\frac{1}{n(n-1)}\lambda_{2}+\frac{1}{(n-1)(n-2)}
\lambda_{3}+...+\frac{1}{(2)(1)}\lambda_{n} \geq0.
$$
then there is a positive (i.e. all of its entries are positive) doubly stochastic matrix $D$ in $ \Delta_n^s$ such that
$D$ has spectrum $\lambda$.
\end{proposition}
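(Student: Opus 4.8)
The plan is to write down the matrix explicitly as a Soules-type spectral sum. First I would fix the orthonormal basis $\{e_n,s_1,\dots,s_{n-1}\}$ of $\mathbb{R}^n$, where for $1\le m\le n-1$
\[
s_m=\frac{1}{\sqrt{m(m+1)}}\bigl(\underbrace{1,\dots,1}_{m},\,-m,\,0,\dots,0\bigr)^{T}.
\]
Each $s_m$ has coordinate sum $0$ and hence is orthogonal to $e_n$; the nested supports make the check that the $s_m$ are pairwise orthogonal and of unit norm routine, so $\{e_n,s_1,\dots,s_{n-1}\}$ is an orthonormal basis. I would then set
\[
D=J_n+\sum_{m=1}^{n-1}\lambda_{n+1-m}\,s_m s_m^{T}.
\]
Since $J_n=e_ne_n^{T}$ and every $s_m$ is orthogonal to $e_n$, this is the spectral decomposition of a symmetric matrix having eigenvalue $1$ on $e_n$ and eigenvalue $\lambda_{n+1-m}$ on $s_m$; thus $D$ is symmetric, $De_n=e_n$ (so $D$ is doubly quasi-stochastic), and $D$ has spectrum $\lambda$. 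It then remains only to show that $D$ is entrywise nonnegative, for then the row sums being $1$ forces every entry into $[0,1]$ and $D\in\Delta^s_n$.

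For the entry bounds I would put $\mu_m=\lambda_{n+1-m}$, so that $\mu_1\le\mu_2\le\cdots\le\mu_{n-1}$ and, because $\lambda_2<1$, each $\mu_m<1$. Reading off the entries of $s_m s_m^{T}$ (which is supported on the first $m+1$ coordinates) and using the telescoping identity $\sum_{m=q}^{n-1}\frac{1}{m(m+1)}=\frac1q-\frac1n$, a direct computation gives, for $p<q$,
\[
D_{pq}=\frac{1-\mu_{q-1}}{q}-\sum_{m=q}^{n-1}\frac{1-\mu_m}{m(m+1)}
\]
(in particular $D_{pq}$ depends only on $q$), together with $D_{pp}=D_{11}+\sum_{m=1}^{p-1}\frac{\mu_{p-1}-\mu_m}{m(m+1)}$ and $D_{11}=\frac1n+\sum_{m=1}^{n-1}\frac{\mu_m}{m(m+1)}$. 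The substitution $j=n+1-m$ identifies this last expression with $\frac1n+\frac{1}{n(n-1)}\lambda_2+\cdots+\frac{1}{2\cdot1}\lambda_n$; that is, $D_{11}$ is exactly the left-hand side of the hypothesis, so $D_{11}\ge0$ is precisely what is assumed.

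The two remaining inequalities are then short monotonicity arguments. Since $\mu_{p-1}\ge\mu_m$ for $m\le p-1$, every summand of $D_{pp}-D_{11}$ is nonnegative, so $D_{pp}\ge D_{11}\ge0$. For $p<q$, monotonicity gives $0\le 1-\mu_m\le 1-\mu_{q-1}$ whenever $m\ge q-1$, hence
\[
\sum_{m=q}^{n-1}\frac{1-\mu_m}{m(m+1)}\le(1-\mu_{q-1})\Bigl(\tfrac1q-\tfrac1n\Bigr)<\frac{1-\mu_{q-1}}{q},
\]
so $D_{pq}>0$. Therefore $D\ge0$ entrywise, $D\in\Delta^s_n$, and $D$ has spectrum $\lambda$.

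The main obstacle here is really only bookkeeping: one has to pair the eigenvalues with the basis vectors in the unique order that makes $D_{11}$ equal the prescribed linear form (so $\lambda_2$ is matched with the longest vector $s_{n-1}$ and $\lambda_n$ with $s_1$), and then manipulate the entry formulas until the telescoped shape above appears. I would also flag that ``positive'' is slightly too strong as literally stated: the construction only produces a nonnegative matrix, and for $\lambda=(1,-\tfrac{1}{n-1},\dots,-\tfrac{1}{n-1})$ the displayed inequality holds with equality while the only symmetric doubly stochastic matrix with that spectrum is $C_n$, whose diagonal is zero; if instead the displayed inequality is assumed strict, then $D_{11}>0$ and all the estimates above become strict, so $D$ is positive.
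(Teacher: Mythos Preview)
The paper does not prove this proposition at all: it quotes it from \cite{hw} precisely in order to point out that it is \emph{false} as stated. Theorem~1.2 (the main result of \cite{fa}) exhibits $\lambda=(1,0,-2/3)$ as a vector satisfying the displayed inequality for which no \emph{positive} symmetric doubly stochastic matrix exists; the only matrices in $\Delta_3^s$ with that spectrum are permutations of the matrix $A$ with two zero diagonal entries. So there is no ``paper's own proof'' to compare against.

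Your construction is the classical Soules/Hwang--Pyo one, and your bookkeeping is correct: the matrix $D$ you build is symmetric doubly quasi-stochastic with the prescribed spectrum, $D_{11}$ equals the left-hand side of the hypothesis, $D_{pp}\ge D_{11}$, and the off-diagonal entries are strictly positive. Crucially, you also spot the real issue yourself in the last paragraph: the argument only yields $D\ge 0$, not $D>0$, and the extremal case $\lambda=(1,-\tfrac{1}{n-1},\dots,-\tfrac{1}{n-1})$ forces $D=C_n$ with zero diagonal. That observation is exactly the content of the paper's counterexamples (and indeed, for $n=3$ and $\lambda=(1,0,-2/3)$ your $D$ is the very matrix $A$ displayed after Theorem~1.2). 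So your write-up is not a proof of the proposition as stated---because no such proof exists---but a correct proof of the amended statement with ``nonnegative'' in place of ``positive'' (equivalently, ``positive'' under the strict inequality), together with a correct diagnosis of why the original claim fails.
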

   In \cite{fa} the author presented a counterexample of the preceding proposition as follows.
\begin{theorem}
Let $\lambda=(1,0,-2/3).$ Then there does not exist a $3\times 3$ symmetric positive doubly stochastic matrix with spectrum $\lambda.$
 \end{theorem}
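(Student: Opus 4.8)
\section{Proof plan}

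The plan is to use the fact that for every $A\in\Delta_3^s$ the vector $e_3$ is an eigenvector with eigenvalue $1$, so $e_3^{\perp}$ is an $A$-invariant subspace on which $A$ acts with the remaining eigenvalues $\lambda_2,\lambda_3$. For the prescribed spectrum $\lambda=(1,0,-2/3)$ this forces $A-J_3$ to be a real symmetric matrix with eigenvalues $0,0,-\tfrac23$; in particular $A-J_3$ has rank one, hence $A-J_3=-\tfrac23\,zz^{T}$ for some unit vector $z=(z_1,z_2,z_3)^{T}\in\mathbb{R}^3$ spanning the $(-\tfrac23)$-eigenspace. Since $(A-J_3)e_3=Ae_3-J_3e_3=e_3-e_3=0$, the vector $e_3$ lies in the kernel of the rank-one matrix $A-J_3$, so $z$, which spans the orthogonal complement of that kernel, satisfies $z\perp e_3$, i.e. $z_1+z_2+z_3=0$ and $z_1^2+z_2^2+z_3^2=1$. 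Thus every symmetric doubly stochastic matrix with spectrum $\lambda$ has entries $A_{ij}=\tfrac13-\tfrac23 z_iz_j$ for such a $z$, and conversely.

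Next I would translate positivity into conditions on $z$. The diagonal requirement $A_{ii}>0$ reads $z_i^2<\tfrac12$ for $i=1,2,3$ (the off-diagonal inequalities $A_{ij}>0$ turn out to hold automatically because $z_k^2\le 1$, but they play no role below). From $z_1+z_2+z_3=0$ and $z_1^2+z_2^2+z_3^2=1$ one gets $z_1z_2+z_1z_3+z_2z_3=-\tfrac12$, and then, using $z_i+z_j=-z_k$ to eliminate, the identity $z_iz_j=z_k^{2}-\tfrac12$ whenever $\{i,j,k\}=\{1,2,3\}$.

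Finally I would extract the contradiction. If $A$ is positive then $z_i^2<\tfrac12$ for every $i$, so each of $z_1z_2=z_3^2-\tfrac12$, $z_1z_3=z_2^2-\tfrac12$, $z_2z_3=z_1^2-\tfrac12$ is strictly negative. But their product equals $(z_1z_2z_3)^2\ge 0$, which is impossible for a product of three strictly negative real numbers. Hence no positive matrix in $\Delta_3^s$ can have spectrum $(1,0,-2/3)$.

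I do not expect a serious obstacle here; the only step that needs a little care is the reduction in the first paragraph, namely checking that $A-J_3$ is exactly of rank one (so the representation $-\tfrac23 zz^{T}$ is forced) and that the unit eigenvector $z$ is orthogonal to $e_3$. A reader preferring a purely computational route could instead write $A$ in terms of its three off-diagonal entries, impose $\det A=0$ together with the coefficient condition on $x$ in the characteristic polynomial $x(x-1)(x+\tfrac23)$, and reach the same contradiction; the eigenvector formulation merely organizes this transparently. It is worth noting that $\lambda=(1,0,-2/3)$ satisfies the inequality of Proposition 1.1 with equality, which is exactly why it supplies a counterexample to that proposition.
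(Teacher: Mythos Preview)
Your argument is correct. The rank-one representation $A=J_3-\tfrac23\,zz^{T}$ with $z\perp e_3$, $\|z\|=1$, indeed parametrizes every $A\in\Delta_3^s$ with spectrum $(1,0,-\tfrac23)$, the identity $z_iz_j=z_k^{2}-\tfrac12$ follows exactly as you compute, and the sign contradiction $(z_1z_2)(z_1z_3)(z_2z_3)=(z_1z_2z_3)^2\ge 0$ versus each factor being strictly negative is clean. One tiny cosmetic point: your aside that the off-diagonal inequalities ``hold automatically'' actually yields $A_{ij}=\tfrac23(1-z_k^{2})\ge 0$, not strict positivity; but as you say, only the diagonal conditions are used, so this is harmless.

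Your route differs from the one the paper reports from \cite{fa}. There, the argument is structural rather than analytic: one exhibits the explicit matrix
\[
A=\begin{pmatrix}0&2/3&1/3\\2/3&0&1/3\\1/3&1/3&1/3\end{pmatrix}
\]
with spectrum $(1,0,-\tfrac23)$ and shows that \emph{every} $B\in\Delta_3^s$ similar to $A$ is of the form $P^{T}AP$; since $A$ has a zero entry, so does every such $B$, and no positive realization exists. That approach proves more---namely that $A$ is DS in $\Delta_3^s$, which is what the paper really needs for Problems~1.3 and~1.4---at the cost of a heavier uniqueness argument. Your spectral/rank-one parametrization is shorter and entirely self-contained for the bare non-positivity claim, and in fact it implicitly recovers the same family: the equality cases $z_k^{2}=\tfrac12$ in your inequalities are exactly where a diagonal entry vanishes, producing the three permutations of the matrix above.
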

In~\cite{bru} it was pointed out that the 2-tuple $(1,-1)$ which is the spectrum of $C_2$ is also another counterexample.
 Moreover, it should be mentioned here that the proof of the preceding theorem (which is the main result of \cite{fa}) is done by showing that the matrix
  $A=\left ( \begin{array}{ccc} 0&2/3&1/3\\2/3&0&1/3\\1/3&1/3&1/3\\ \end{array} \right )$ has spectrum $\lambda$ and the only matrices in $\Delta_3^s$ that are similar to $A$ are of the form $P^TAP$ for some permutation matrix $P.$
Based on this,  the author suggested the following problem.
    \begin{problem} Characterize all the $n$-tuples $\lambda=(1,\lambda_2,...,\lambda_n)$ with $1\geq \lambda_2\geq ...\geq \lambda_n\geq -1$ such that up to a permutation similarity, there exists a unique symmetric doubly stochastic matrix $A$ with spectrum $\lambda$ $($such $\lambda$ is said to characterize $A$ permutationally or $A$ is said to be permutationally characterized by $\lambda$$).$
    \end{problem}
    Note now  that in the language of the preceding problem, the $3\times 3$  matrix $A$ presented above is permutationally characterized by $(1,0,-2/3).$

  Two matrices $A$ and  $B$ are said to be {\it permutationally similar} if $B=P^TAP$ for some permutation matrix $P.$ Next, we say that a  doubly stochastic matrix $A$ is \textit{determined by its spectra} (DS for short) in $ \Delta_n$  if for every element $B$ of $ \Delta_n$ which is similar to $A,$ then $B$ is permutationally similar to $A.$  If in addition $A$ is symmetric then $A$ is said to be DS in $ \Delta_n^s$ if $B\in \Delta_n^s$ is similar to $A$ implies that $B$ is permutationally similar to $A.$
     For a symmetric doubly stochastic matrix $A,$ obviously  $A$ is DS in $ \Delta_n$ implies that $A$ is DS in $  \Delta_n^s.$ However, it is not known whether the converse is true or false and though it is an interesting open problem, it will not be dealt with here. Also,
   though the problem of characterizing all doubly stochastic matrices  that are DS in $ \Delta_n$ is very interesting and we will touch on some aspects of this problem,   however here we are particularly more interested in the following problem which is very related to Problem 1.3.

  \begin{problem} Characterize all symmetric doubly stochastic matrices that are DS in $ \Delta_n^s.$
  \end{problem}

   All above problems appear to be very difficult and it seems that there is no systematic way under which these problems can be approached (see Section 3).  Practically nothing is known about them except perhaps what is mentioned earlier. In addition,
  we note that if a symmetric doubly stochastic matrix $A$ is permutationally characterized by $(1,\lambda_2,...,\lambda_n),$ then obviously $A$ is DS in   $ \Delta_n^s.$  So that in order to solve Problem 1.3, we need to solve Problem 1.4 first and then for every solution $X$ of this last problem, we have to find the spectrum of $X.$

  The rest of the paper is organized as follows. Section 2 is  mainly concerned with obtaining some general results for our two problems.  In Section 3, we completely solve Problem 1.3 and Problem 1.4  for the case $n=3$ which is one of the main results of this paper. In Section 4, we study the close connection of Problem 1.4   with spectral graph theory; more precisely with  ``regular graphs that are DS.'' We conclude in Section 5 by posing two open questions and by introducing a conjecture related to the general case.

\section{Some general results}

We start our study with the following two lemmas that explore some aspects of the spectral properties of doubly stochastic matrices and are consequences of the Perron-Frobenius theorem (see, e.g. \cite{mi}). But first recall that a square nonnegative matrix $A$ is {\it irreducible} if $A$ is not permutationally similar to a matrix of the form $\left ( \begin{array}{ccc} A_1&0\\A_2&A_3\\ \end{array} \right )$ where $A_1$ and $A_2$ are square. Otherwise, $A$ is said to be {\it reducible.}
\begin{lemma} Every doubly stochastic matrix is permutationally similar to a direct sum of irreducible doubly stochastic matrices.
\end{lemma}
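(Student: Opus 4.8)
The plan is to use the standard block-triangular normal form of a reducible nonnegative matrix together with the doubly stochastic hypothesis to force the block-triangular form to actually be block-diagonal, then induct.

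First I would observe that if $A$ is irreducible there is nothing to prove, so assume $A$ is reducible. By definition and a repeated application of the reducibility splitting (this is the Frobenius normal form), $A$ is permutationally similar to a block upper-triangular matrix $\tilde A=(A_{ij})_{1\le i,j\le k}$ with square diagonal blocks $A_{ii}$, each $A_{ii}$ either irreducible or $1\times 1$, and $A_{ij}=0$ for $i>j$. Since permutation similarity preserves the property of being doubly stochastic, $\tilde A$ is doubly stochastic. Now I would exploit the column sums and row sums: look at the last block column. The block $A_{kk}$ sits at the bottom of that column, and every entry of $\tilde A$ strictly below the diagonal blocks is zero, so the columns of $A_{kk}$ already have sum $1$ coming just from the entries within $A_{kk}$ (the entries below it in $\tilde A$ are all zero). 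Since $\tilde A$ is nonnegative and its column sums are exactly $1$, this forces all the blocks $A_{ik}$ with $i<k$ to be zero. Symmetrically (or by the same argument on the first block row using row sums), one peels off $A_{kk}$ as a direct summand: $\tilde A = A' \oplus A_{kk}$ where $A'=(A_{ij})_{1\le i,j\le k-1}$ is again doubly stochastic and in block upper-triangular form.

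Then I would finish by induction on the size $n$ (or on the number of blocks $k$): $A'$ is a doubly stochastic matrix of smaller size, hence by the induction hypothesis it is permutationally similar to a direct sum of irreducible doubly stochastic matrices, and $A_{kk}$ is either $1\times 1$ (hence the irreducible doubly stochastic matrix $(1)$) or irreducible doubly stochastic. Combining a permutation that realizes the decomposition of $A'$ with the identity on the last block, and composing with the permutation that brought $A$ to $\tilde A$, gives a single permutation matrix $P$ with $P^TAP$ a direct sum of irreducible doubly stochastic matrices, as required.

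The only real subtlety — the step I would be most careful about — is the argument that the off-diagonal blocks in the extreme block row/column must vanish. It is genuinely a consequence of the interplay of three facts: nonnegativity, the fixed column (or row) sum $1$, and the fact that in the Frobenius normal form the entries strictly below the block diagonal are zero, so that the mass in the last block column is already accounted for by $A_{kk}$ alone. Once that is stated cleanly the rest is routine bookkeeping with permutation matrices; one should just be slightly attentive that the composition of permutation similarities is again a permutation similarity, which is immediate since permutation matrices form a group.
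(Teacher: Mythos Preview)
The paper does not actually prove this lemma; it simply records it as a standard consequence of Perron--Frobenius theory with a reference to Minc's book. Your overall strategy---pass to the Frobenius block upper-triangular form and use the row/column-sum constraints together with nonnegativity to kill the off-diagonal blocks, then induct---is the right one, but the key step is misstated.

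You look at the \emph{last block column} and assert that the columns of $A_{kk}$ already sum to $1$ ``because the entries below it in $\tilde A$ are all zero.'' In block upper-triangular form there is nothing below $A_{kk}$ at all, so that observation is vacuous; the blocks $A_{ik}$ with $i<k$ sit \emph{above} $A_{kk}$ in that last block column and are, a priori, nonzero. So from the column-sum condition alone you cannot yet conclude that the columns of $A_{kk}$ sum to $1$. The fix is to look instead at the last block \emph{row} (or, dually, the first block column): there the only nonzero block is $A_{kk}$, so its \emph{rows} sum to $1$, and hence the sum of all entries of $A_{kk}$ equals its size $n_k$. Now each column sum of $A_{kk}$ is at most $1$ (the full column sums are $1$ and the blocks above contribute nonnegatively), and these $n_k$ column sums also total $n_k$; therefore every column sum of $A_{kk}$ is exactly $1$, which forces $A_{ik}=0$ for all $i<k$. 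With this correction your peeling-off argument and the induction go through exactly as you describe.
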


\begin{lemma} Let $A$ be an $n\times n$ irreducible doubly stochastic matrix. If $A$ has exactly $k$ eigenvalues of unit modulus, then these are the $k$th roots of unity. In addition, if $k>1,$ then $k$ is a divisor of $n$ and $A$ is permutationally similar to a matrix of the form
$$  \left(
\begin{array}{ccccc}
0  & A_1 & 0  & \ldots    &  0 \\
0  & 0 & A_2  & \ldots    &  0 \\
\vdots & \vdots     & \vdots     & \ddots& \vdots \\
0  & 0 & 0  & \ldots    &  A_{k-1} \\
A_k  & 0 & 0  & \ldots    &  0 \\
\end{array}
\right)
$$
where $A_i$ is doubly stochastic of order $\frac{n}{k}\times\frac{n}{k}$ for $i=1,...k.$
\end{lemma}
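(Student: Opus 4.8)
The plan is to deduce everything from the classical Perron--Frobenius theory of irreducible nonnegative matrices, specialised to the doubly stochastic case. First I would record that an irreducible doubly stochastic matrix $A$ has spectral radius $1$: since $Ae_n=e_n$, the number $1$ is an eigenvalue of $A$, while every eigenvalue of $A$ has modulus at most $1$ because $A$ is (row) stochastic. Hence $\rho(A)=1$, and by the Perron--Frobenius theorem $1$ is in fact a simple eigenvalue of $A$.

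Next I would invoke the part of the Perron--Frobenius theorem dealing with the index of imprimitivity: if an irreducible nonnegative matrix has exactly $k$ eigenvalues of maximal modulus $\rho(A)$, then these are precisely $\rho(A)\,\omega^{j}$ for $j=0,1,\dots,k-1$, where $\omega=e^{2\pi i/k}$, and moreover the matrix is permutationally similar to a block matrix of the displayed cyclic shape, with square zero blocks on the diagonal of sizes $n_1,\dots,n_k$ (a priori unequal) and the block $A_i$ of size $n_i\times n_{i+1}$, indices read cyclically modulo $k$. Applying this with $\rho(A)=1$ shows at once that the $k$ eigenvalues of unit modulus are the $k$th roots of unity, which settles the first assertion.

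It then remains, when $k>1$, to pin down the block sizes. Here I would use double stochasticity directly on the cyclic form (which is itself doubly stochastic, since permutational similarity merely conjugates by a permutation matrix): the rows meeting the $i$th block row have their only nonzero entries inside $A_i$, so $A_i$ has all row sums equal to $1$; likewise the columns meeting the $(i+1)$st block column have their only nonzero entries inside $A_i$, so $A_i$ has all column sums equal to $1$. Summing all the entries of the $n_i\times n_{i+1}$ nonnegative matrix $A_i$ in two ways gives $n_i=n_{i+1}$, and since this holds for every $i$ cyclically, all the $n_i$ equal a common value $m$; hence $n=km$, i.e. $k\mid n$ and $m=n/k$, and each $A_i$ is a square doubly stochastic matrix of order $n/k$, as claimed.

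There is no deep obstacle here: the engine is the standard Perron--Frobenius structure theorem for imprimitive irreducible matrices, which we are content to cite (e.g. \cite{mi}). The only genuinely new ingredient is the elementary two-way counting argument that forces the diagonal blocks, and hence all blocks, to have equal size $n/k$; the one point to be careful about is the (immediate) remark that the cyclic form inherits double stochasticity, so that the row- and column-sum bookkeeping on the blocks $A_i$ is legitimate.
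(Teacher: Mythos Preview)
Your proposal is correct and in full agreement with the paper, which does not actually prove this lemma but simply records it as a consequence of the Perron--Frobenius theorem (citing \cite{mi}). Your write-up supplies exactly the details one would expect behind that citation: the standard imprimitivity structure gives the cyclic block form and the $k$th roots of unity, and the double-stochasticity bookkeeping forces equal block sizes $n/k$.
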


As a result, we have the following.
\begin{theorem} Every  permutation matrix is DS in $ \Delta_n.$
\end{theorem}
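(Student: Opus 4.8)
The plan is to use Lemmas 2.1 and 2.2 to force the structure of any doubly stochastic matrix similar to a given permutation matrix, and then appeal to a standard fact about permutations. Let $A$ be the permutation matrix of a permutation $\sigma$. First I would record two elementary facts: the spectrum of $A$ consists of roots of unity, so every eigenvalue of $A$ lies on the unit circle; and, writing $\sigma$ as a product of disjoint cycles of lengths $m_1,\dots,m_r$, the matrix $A$ is permutationally similar to the direct sum of the corresponding cyclic (shift) permutation matrices, so that the characteristic polynomial of $A$ is $\prod_j (x^{m_j}-1)$. (This decomposition is also a special case of Lemma 2.1, since the irreducible permutation matrices are exactly the cyclic ones.)

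Next, suppose $B\in\Delta_n$ is similar to $A$. Then $B$ has the same spectrum as $A$, so every eigenvalue of $B$ lies on the unit circle. By Lemma 2.1, $B$ is permutationally similar to $B_1\oplus\cdots\oplus B_t$ with each $B_i$ an irreducible doubly stochastic matrix, say of order $n_i$. Since $\mathrm{spec}(B)=\bigcup_i \mathrm{spec}(B_i)$, every eigenvalue of each $B_i$ has unit modulus, i.e.\ $B_i$ has exactly $k=n_i$ eigenvalues of unit modulus. Applying Lemma 2.2 to $B_i$: if $n_i=1$ then $B_i=(1)$; and if $n_i\geq 2$ then $k=n_i>1$, so $B_i$ is permutationally similar to the cyclic block matrix of Lemma 2.2 in which each block is doubly stochastic of order $n_i/n_i=1$, hence equals $(1)$ --- that is, $B_i$ is permutationally similar to the $n_i\times n_i$ cyclic shift matrix. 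Therefore $B$ is permutationally similar to a direct sum of cyclic permutation matrices; in particular $B$ is itself a permutation matrix.

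It remains to show that two permutation matrices that are similar are permutationally similar. Since $A$ and $B$ are similar, $\mathrm{tr}(A^d)=\mathrm{tr}(B^d)$ for all $d\geq 1$. For the permutation matrix of a permutation with exactly $a_m$ cycles of length $m$, one has $\mathrm{tr}(A^d)=\sum_{m\mid d} m\,a_m$, since $A^d$ is the permutation matrix of $\sigma^d$ and its fixed points are precisely the elements lying in cycles whose length divides $d$. By Möbius inversion, $m\,a_m=\sum_{k\mid m}\mu(m/k)\,\mathrm{tr}(A^k)$, so the whole sequence $(a_m)$ --- the cycle type --- is determined by $A$ up to similarity. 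Hence $A$ and $B$ have the same cycle type, equivalently their underlying permutations are conjugate in the symmetric group, i.e.\ $B=P^TAP$ for some permutation matrix $P$. This proves $A$ is DS in $\Delta_n$.

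I expect the main obstacle to be precisely the interplay in the middle paragraph: the last step (similar permutation matrices are permutationally similar) is only available once one knows $B$ is genuinely a permutation matrix, and that structural conclusion is exactly what Lemmas 2.1 and 2.2 deliver --- the equality of traces of powers alone would not suffice for a general doubly stochastic $B$. The degenerate situations ($n\le 1$, the blocks with $n_i=1$, or $A=I_n$) should be checked separately but are immediate.
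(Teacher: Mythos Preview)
Your proof is correct and follows essentially the same route as the paper: use Lemma~2.1 to reduce to irreducible summands and Lemma~2.2 to force each irreducible summand of $B$ to be a cyclic shift, hence $B$ is a permutation matrix. Your argument is in fact more complete than the paper's sketch, since you explicitly supply the final step---that two similar permutation matrices have the same cycle type (via traces of powers and M\"obius inversion) and are therefore permutationally similar---whereas the paper simply asserts that the reducible case ``can be completed by using Lemma~2.1.''
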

\begin{proof}
Suppose first that  $A$ is an irreducible permutation matrix and let $X^{-1}AX$ be a doubly stochastic matrix, then clearly
  $X^{-1}AX$ is irreducible and all of its eigenvalues are of unit modulus. Therefore by the preceding lemma $X^{-1}AX$ is a permutation matrix. Now if $A$ is reducible then the proof can be completed by using Lemma 2.1.
\end{proof}

An immediate consequence is the following corollary.
\begin{corollary}
 Let $\lambda=(1,\lambda_2,...,\lambda_n)$ be in $\mathbb{R}^n$  where $\lambda_i\in \{-1,1\}$ for $i=2,...,n$ and such that $1+\lambda_2+...+\lambda_n\geq 0.$  Then $\lambda$ characterizes permutationally a vertex (i.e. symmetric permutation matrix) of $\Delta_n^s.$
\end{corollary}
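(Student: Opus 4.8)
The plan is to deduce this corollary directly from Theorem 2.3 together with the observation that a symmetric matrix all of whose eigenvalues lie in $\{-1,1\}$ and which is doubly stochastic must in fact be a symmetric permutation matrix. First I would fix such a $\lambda$ and produce \emph{some} symmetric doubly stochastic matrix $A$ with spectrum $\lambda$: since the $\lambda_i$ are $\pm 1$, the required matrix can be assembled as a direct sum of blocks, namely copies of $I_1=(1)$ for each eigenvalue $+1$ (there is at least one, coming from the entry $1$) and copies of $C_2=\left(\begin{array}{cc}0&1\\1&0\end{array}\right)$ for each pair consisting of a $+1$ and a $-1$; the hypothesis $1+\lambda_2+\cdots+\lambda_n\ge 0$ guarantees there are at least as many $+1$'s as $-1$'s, so this pairing exhausts the list. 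This $A$ is a symmetric permutation matrix, hence by Theorem 2.3 it is DS in $\Delta_n$, and a fortiori DS in $\Delta_n^s$.

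Next I would argue that DS in $\Delta_n^s$ already gives the desired uniqueness up to permutation similarity among \emph{all} symmetric doubly stochastic matrices with spectrum $\lambda$. Indeed, suppose $B\in\Delta_n^s$ has the same spectrum $\lambda$ as $A$. Since $A$ and $B$ are both real symmetric with identical eigenvalues (counted with multiplicity), they are orthogonally similar, in particular similar; hence by the definition of DS in $\Delta_n^s$, $B$ is permutationally similar to $A$. Therefore every symmetric doubly stochastic matrix with spectrum $\lambda$ is of the form $P^TAP$, and since each such $P^TAP$ is again a symmetric permutation matrix, $\lambda$ characterizes the vertex $A$ of $\Delta_n^s$ permutationally.

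The one point that needs a little care — and what I would flag as the main obstacle — is confirming that the direct-sum construction in the first step really does realize $\lambda$ and lands inside $\Delta_n^s$: one must check that the multiplicity of $-1$ in $\lambda$ does not exceed that of $+1$, which is exactly the content of the sign condition $1+\lambda_2+\cdots+\lambda_n\ge 0$ once one notes $\lambda_1=1$, and that a direct sum of doubly stochastic matrices (here $I_1$'s and $C_2$'s) is doubly stochastic with spectrum the union of the blocks' spectra. Everything else is immediate from Theorem 2.3 and the elementary fact that real symmetric matrices with equal spectra are similar; no genuinely new estimate is required.
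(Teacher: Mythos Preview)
Your proof is correct and follows exactly the route the paper intends: the paper presents this corollary as an immediate consequence of Theorem 2.3 without further detail, and what you have written is precisely the fleshing-out of that immediate consequence (construct a symmetric permutation matrix with the right spectrum via $I_1$'s and $C_2$'s, invoke Theorem 2.3, and use that cospectral symmetric matrices are similar).
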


\begin{lemma} Let $X$ be an invertible matrix such that $X^{-1}J_nX$ is symmetric doubly stochastic. Then $X^{-1}J_nX=J_n.$
\end{lemma}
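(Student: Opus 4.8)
The plan is to use that $J_n = e_n e_n^T$ is a symmetric idempotent of rank one. Write $B := X^{-1}J_nX$. Since $J_n^2 = J_n$ we get $B^2 = X^{-1}J_n^2X = X^{-1}J_nX = B$, so $B$ is idempotent, and since trace is a similarity invariant, $\operatorname{tr} B = \operatorname{tr} J_n = 1$. By hypothesis $B$ is also symmetric; a symmetric idempotent matrix is the orthogonal projection onto its column space, whose dimension equals its trace. Hence $\operatorname{rank} B = 1$, and $B = uu^T$ for some unit vector $u \in \mathbb{R}^n$, with $\operatorname{range}(B) = \mathrm{span}(u)$.

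It remains to identify $u$, and here I would invoke double stochasticity of $B$, which (as recorded in the introduction) is equivalent to $Be_n = e_n$ and $e_n^TB = e_n^T$. In particular $e_n = Be_n \in \operatorname{range}(B) = \mathrm{span}(u)$, so $e_n$ and the unit vector $u$ are parallel; as $e_n$ is itself a unit vector, $u = \pm e_n$. Therefore $B = uu^T = e_ne_n^T = J_n$, which is the claim.

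I do not expect any real obstacle: the whole point is the elementary observation that ``symmetric'' together with ``similar to the rank-one orthogonal projection $J_n$'' already forces $B$ to be a rank-one orthogonal projection, after which the single linear constraint $Be_n = e_n$ coming from double stochasticity pins down $B = J_n$. One can run the same argument through the spectral theorem instead: $B$ is symmetric and similar to $J_n$, hence symmetric with spectrum $\{1, 0, \dots, 0\}$, so $B = uu^T$ for a unit eigenvector $u$ belonging to the eigenvalue $1$; since that eigenvalue is simple and $Be_n = e_n$, again $u = \pm e_n$.
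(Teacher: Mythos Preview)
Your argument is correct and follows essentially the same route as the paper's own proof. The paper invokes the spectral theorem to diagonalize $B=X^{-1}J_nX$ as $U(1\oplus 0_{n-1})U^T$ with first column of $U$ equal to $e_n$ (using $Be_n=e_n$), and then observes this product is $J_n$; your version packages the same two ingredients---symmetric with spectrum $\{1,0,\dots,0\}$, and $e_n$ in the $1$-eigenspace---via the equivalent ``symmetric idempotent of trace one'' language, and your stated alternative through the spectral theorem is exactly the paper's argument.
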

\begin{proof} Since $X^{-1}J_nX$ is symmetric doubly stochastic, then by the spectral theorem for symmetric matrices, there exists an orthogonal matrix $U$ whose first column is $e_n$ and the remaining columns are orthogonal to $e_n$ (i.e. the sum of all components in each of the remaining columns is zero) such that $U^TX^{-1}J_nXU=(1\oplus 0_{n-1})$ where $0_{n-1}$ is the $n-1\times n-1$ zero matrix. Hence $X^{-1}J_nX=U(1\oplus 0_{n-1})U^T.$ But then a simple check shows that $U(1\oplus 0_{n-1})U^T=J_n$ and the proof is complete.
\end{proof}

\begin{corollary} The matrices $I_n,$ $J_n$ and $C_n$  are DS in $ \Delta_n^s.$
\end{corollary}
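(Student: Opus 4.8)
The plan is to treat the three matrices $I_n$, $J_n$, $C_n$ one at a time, reducing each case to a result already in hand (Theorem 2.3 or Lemma 2.6), so that no genuinely new computation is needed.

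For $I_n$: since $I_n$ is itself a (symmetric) permutation matrix, Theorem 2.3 applies verbatim and gives that $I_n$ is DS in $\Delta_n$, hence a fortiori DS in $\Delta_n^s$. For $J_n$: take any $B\in\Delta_n^s$ similar to $J_n$, say $B=X^{-1}J_nX$ with $X$ invertible; Lemma 2.6 immediately forces $B=J_n$, so $B=P^TJ_nP$ with $P=I_n$, and $J_n$ is DS in $\Delta_n^s$.

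For $C_n$ the idea is to exploit the affine relation $C_n=\frac{n}{n-1}J_n-\frac{1}{n-1}I_n$, equivalently $\frac{n-1}{n}\bigl(C_n+\frac{1}{n-1}I_n\bigr)=J_n$. Given $B\in\Delta_n^s$ similar to $C_n$, say $B=X^{-1}C_nX$, I would set $B'=\frac{n-1}{n}\bigl(B+\frac{1}{n-1}I_n\bigr)$ and first check that $B'\in\Delta_n^s$: it is symmetric since $B$ and $I_n$ are; it is nonnegative since we only add a nonnegative diagonal to the nonnegative matrix $B$; and each of its row and column sums equals $\frac{n-1}{n}\bigl(1+\frac{1}{n-1}\bigr)=1$. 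Conjugating the affine relation by $X$ shows $B'=X^{-1}J_nX$, so $B'$ is symmetric doubly stochastic and similar to $J_n$; Lemma 2.6 gives $B'=J_n$, and unwinding the definition of $B'$ yields $B=\frac{n}{n-1}J_n-\frac{1}{n-1}I_n=C_n$. Hence $B=P^TC_nP$ with $P=I_n$, so $C_n$ is DS in $\Delta_n^s$. (Alternatively, one may argue spectrally: a symmetric doubly stochastic $B$ similar to $C_n$ has eigenvalue $1$ on $e_n$ and $-\frac{1}{n-1}$ on $e_n^\perp$, whence $B=J_n-\frac{1}{n-1}(I_n-J_n)=C_n$ by the spectral decomposition.)

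There is essentially no hard step here; the only point requiring a little care is verifying that the shifted and rescaled matrix $B'$ really lands in $\Delta_n^s$ — in particular that the normalizing constant $\frac{n-1}{n}$ restores the row sums to $1$ and that nonnegativity is preserved — since it is precisely this that licenses the appeal to Lemma 2.6.
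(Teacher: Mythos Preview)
Your proof is correct and follows essentially the same route as the paper: $I_n$ is immediate, $J_n$ is handled by the preceding lemma (which is Lemma~2.5 in the paper's numbering, not 2.6 --- 2.6 is the corollary you are proving), and $C_n$ is reduced to the $J_n$ case via the affine identity $C_n=\frac{n}{n-1}J_n-\frac{1}{n-1}I_n$ together with the check that the shifted-and-rescaled matrix lies in $\Delta_n^s$. The only differences are cosmetic: the paper simply calls the $I_n$ case ``obvious'' rather than citing Theorem~2.3, and it arranges the algebra for $C_n$ in the equivalent form $X^{-1}C_nX+\frac{1}{n-1}I_n=\frac{n}{n-1}X^{-1}J_nX$ before normalizing.
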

\begin{proof} The first part is obvious, and the second part follows from the preceding lemma.  For the third part, we note that $C_n=\frac{n}{n-1}J_n-\frac{1}{n-1}I_n$ and then for any invertible matrix $X$ such that $X^{-1}C_nX$ is symmetric doubly stochastic we obtain
$X^{-1}C_nX=\frac{n}{n-1}X^{-1}J_nX-\frac{1}{n-1}I_n.$ Therefore $X^{-1}C_nX+\frac{1}{n-1}I_n=\frac{n}{n-1}X^{-1}J_nX$ and where the left-hand side is a nonnegative matrix with row and column sum equals to $1+\frac{1}{n-1}.$ Thus $X^{-1}J_nX=\frac{n-1}{n}(X^{-1}C_nX+\frac{1}{n-1}I_n)$ is symmetric doubly stochastic and then by the preceding lemma, the proof is complete.
\end{proof}

 Next we need the following auxiliary materials.
 \begin{lemma} The inverse of an invertible doubly quasi-stochastic matrix is doubly quasi-stochastic.
 \end{lemma}
 \begin{proof}
 Multiplying to the left of  $AA^{-1}=I_n$ by $J_n$ we obtain $J_nAA^{-1}=J_n.$ Since $A$ is doubly quasi-stochastic, then $J_nA^{-1}=J_n.$  Similarly, multiplying to the right of $A^{-1}A=I_n$ by $J_n,$ we obtain $A^{-1}J_n=J_n.$ Thus $A^{-1}$ is doubly quasi-stochastic.
 \end{proof}
 \begin{lemma} If $A$ is an $n\times n$ irreducible doubly stochastic matrix such that $B=X^{-1}AX$ is doubly stochastic for some invertible matrix $X$, then there exists a doubly stochastic matrix  $Y$ such that $B=Y^{-1}AY.$
 \end{lemma}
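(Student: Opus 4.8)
The plan is to reduce the given (possibly non-doubly-stochastic) similarity $X$ to an honest doubly stochastic conjugation by correcting $X$ with a doubly quasi-stochastic factor that fixes $A$ up to sign structure. First I would observe that since $A$ is irreducible doubly stochastic, $1$ is a simple eigenvalue with eigenvector $e_n$, and likewise $B=X^{-1}AX$ is irreducible doubly stochastic with Perron eigenvector $e_n$. Because $Ae_n=e_n$, the vector $Xe_n$ is a $1$-eigenvector of $B$, hence $Xe_n=\alpha e_n$ for some nonzero scalar $\alpha$; replacing $X$ by $\alpha^{-1}X$ (which does not change $X^{-1}AX$) we may assume $Xe_n=e_n$. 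Dually, $e_n^TX^{-1}$ is a left $1$-eigenvector of $B$, so $e_n^TX^{-1}=\beta e_n^T$; applying both conditions and Lemma 2.8 (the inverse of an invertible doubly quasi-stochastic matrix is doubly quasi-stochastic) I would argue that in fact $\beta=1$, i.e. $X$ itself may be taken to be doubly quasi-stochastic — equivalently $XJ_n=J_nX=J_n$.

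Next I would try to replace the doubly quasi-stochastic $X$ by a genuinely doubly stochastic $Y$ giving the same conjugation. The key idea is that the set of matrices $Z$ with $Z^{-1}AZ=B$ is a coset of the centralizer of $A$: if $X^{-1}AX=B=Y^{-1}AY$ then $YX^{-1}$ commutes with $A$. So it suffices to find $Y$ of the form $Y=CX$ with $C$ in the centralizer of $A$, $C$ invertible, and $CX$ doubly stochastic. Since $A$ is irreducible doubly stochastic, one can try to build a positive matrix in its centralizer: form a Cesàro-type average $S=\lim_{N\to\infty}\frac{1}{N}\sum_{k=0}^{N-1}A^k$, which is doubly stochastic, commutes with $A$, and (by irreducibility) equals $J_n$; more usefully, consider $P(A)$ for a polynomial $P$ with nonnegative coefficients summing to $1$ and $P$ positive on the spectrum — since $A$ is primitive when it has a single unit eigenvalue, $A^m$ is positive for large $m$ and lies in the centralizer. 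Then I would scale: if $W$ is a positive doubly stochastic matrix commuting with $A$ and sufficiently close to $I_n$ along the relevant directions, the product $WX$ (or a convex combination interpolating $X$ toward the doubly stochastic cone) can be arranged to be nonnegative while remaining invertible and doubly quasi-stochastic, hence doubly stochastic, and still conjugating $A$ to $B$.

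A cleaner route, which I would prefer to push, is to exploit that $B$ is similar to $A$ via $X$ and that both are doubly stochastic: consider the matrix $Y=\lim_{N\to\infty}\frac{1}{N}\sum_{k=0}^{N-1}A^kXB^{-k}$ when the periodic part is trivial, or more robustly project $X$ onto the centralizer coset by averaging over the finite cyclic group generated by the unit-modulus eigenstructure from Lemma 2.2. Each term $A^kXB^{-k}$ equals $X$ (since $AX=XB$ gives $A^kX=XB^k$), so that naive average is just $X$ again; instead the correct averaging is over the commuting family: set $Y=\frac{1}{N}\sum_{k=0}^{N-1} A^k \cdot X \cdot (\text{something})$ — and here is where care is needed. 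The honest statement to prove is that within the affine coset $\{CX : C \in \mathcal{C}(A),\ C \text{ invertible}\}$, which contains the doubly quasi-stochastic representative from the first paragraph, there is a point that is entrywise nonnegative; because $A$ irreducible forces its commutant to contain a positive element and to act transitively enough on the quasi-stochastic cone, one slides from the known doubly quasi-stochastic $X$ toward positivity.

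The main obstacle I anticipate is exactly this last step: guaranteeing nonnegativity of the corrected conjugator. Reducing to a doubly quasi-stochastic $X$ is routine Perron–Frobenius bookkeeping, and identifying the solution set as a centralizer coset is elementary linear algebra; but showing the coset meets the nonnegative orthant requires genuinely using irreducibility of $A$ (to get a rich, positivity-producing commutant) together with the fact that $B$ is itself doubly stochastic. I expect the author's proof to handle this by an explicit averaging against powers of $A$ that is primitive (after splitting off, via Lemma 2.2, the cyclic block structure when there are several unit-modulus eigenvalues and treating each cyclic factor separately), so that a high power $A^m$ is strictly positive and $Y=A^mXB^{-m}\ (=X)$ won't do — rather $Y$ obtained by replacing $X$ with a positive representative $A^m S$ where $S$ solves $S^{-1}AS=B$ on the level of the primitive reduction. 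I would therefore structure the write-up as: (1) normalize $X$ to doubly quasi-stochastic via Perron eigenvectors; (2) handle the imprimitive case by Lemma 2.2, reducing to primitive $A$; (3) in the primitive case, use positivity of $A^m$ to produce a nonnegative invertible conjugator $Y$ in the centralizer coset of $X$, and verify $Y$ is doubly stochastic using Lemma 2.8.
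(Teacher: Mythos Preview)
The paper does not actually prove this lemma: it simply cites Minc's book [12, Theorem~4.1, p.~123]. So there is no in-paper argument to compare your proposal against.

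That said, your proposal does not constitute a proof either. The first paragraph is essentially correct, modulo a minor slip (from $AX=XB$ together with $Be_n=e_n$ one gets that $Xe_n$ is a $1$-eigenvector of $A$, not of $B$ as you wrote; the normalization to a doubly quasi-stochastic $X$ goes through regardless). The real gap is the step you yourself flag as the ``main obstacle'': producing a \emph{nonnegative} element of the centralizer coset $\{CX:C\in\mathcal{C}(A)\}$. None of the mechanisms you propose achieves this. The Ces\`aro limit is $J_n$, which is singular and so cannot serve as an invertible conjugator or as a correction factor preserving invertibility. Multiplying by a positive $W\in\mathcal{C}(A)$ such as a high power $A^m$ gives $A^mX$, whose entries are convex combinations of the (possibly negative) entries of $X$ along each row and hence need not be nonnegative. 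And the average $\tfrac{1}{N}\sum_{k}A^kXB^{-k}$ you correctly observe collapses to $X$ itself. The closing appeals to ``sliding toward positivity'' or the commutant ``acting transitively enough on the quasi-stochastic cone'' are not arguments, and the imprimitive reduction via Lemma~2.2, while reasonable as a preprocessing step, does not by itself address nonnegativity of the conjugator.

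In short, you have correctly identified the structure of the problem---normalize $X$ to be doubly quasi-stochastic, then seek a nonnegative representative in the resulting coset---but the crucial existence step is left open, and the candidate constructions you list all fail for the reasons above. Closing this gap requires a genuinely different device (as in Minc's argument), which your proposal never reaches.
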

 \begin{proof} See [12, Theorem 4.1, p. 123].
 \end{proof}
 \begin{corollary} The matrices $I_n,$ $J_n$ and $C_n$  are DS in $ \Delta_n.$
\end{corollary}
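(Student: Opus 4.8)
The plan is to upgrade Corollary 2.6 from $\Delta_n^s$ to $\Delta_n$ by replacing the use of the spectral theorem (Lemma 2.5), which required symmetry, by the combination of Lemma 2.7 and Lemma 2.8. The matrix $I_n$ is trivial, since the only matrix similar to $I_n$ is $I_n$ itself and $I_n=P^TI_nP$ for every permutation matrix $P$. So all the work lies in $J_n$ and $C_n$, and the first thing I would isolate is the key observation that \emph{every doubly stochastic matrix similar to $J_n$ is actually equal to $J_n$}. Once this is in hand, $J_n$ is DS in $\Delta_n$ because $P^TJ_nP=J_n$ for all permutations $P$, and the $C_n$ case follows by exactly the algebraic manipulation already used to prove Corollary 2.6.

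For the key observation, I would take $B=X^{-1}J_nX\in\Delta_n$ with $X$ invertible. Since $J_n$ is irreducible (all its entries are positive) and $B$ is doubly stochastic, Lemma 2.8 yields a doubly stochastic matrix $Y$ with $B=Y^{-1}J_nY$; in particular $Y$ is invertible. Because $Y$ is doubly quasi-stochastic we have $J_nY=J_n$, and because $Y^{-1}$ is doubly quasi-stochastic by Lemma 2.7 we have $Y^{-1}J_n=J_n$. Hence $B=Y^{-1}(J_nY)=Y^{-1}J_n=J_n$, which proves the observation and therefore that $J_n$ is DS in $\Delta_n$.

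For $C_n$ (with $n\geq 2$) I would reproduce the argument in the proof of Corollary 2.6, now legitimately inside $\Delta_n$. If $B=X^{-1}C_nX\in\Delta_n$, then from $C_n=\frac{n}{n-1}J_n-\frac{1}{n-1}I_n$ we get $X^{-1}J_nX=\frac{n-1}{n}\left(B+\frac{1}{n-1}I_n\right)$, whose right-hand side is a nonnegative matrix all of whose row and column sums equal $1$, hence doubly stochastic. By the key observation, $X^{-1}J_nX=J_n$, and therefore $B=\frac{n}{n-1}J_n-\frac{1}{n-1}I_n=C_n$. Since $P^TC_nP=C_n$ for every permutation $P$, this shows $C_n$ is DS in $\Delta_n$.

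The only nonroutine ingredient here is Lemma 2.8, i.e. the fact (quoted in its proof from the literature) that a similarity between a fixed irreducible doubly stochastic matrix and another doubly stochastic matrix can always be realized by a doubly stochastic — hence doubly quasi-stochastic and invertible — conjugator; everything else is bookkeeping with the elementary identity $J_nA=AJ_n=J_n$ valid for any doubly quasi-stochastic $A$. As an alternative for the $J_n$ step that avoids Lemma 2.8 altogether, one can observe that a doubly stochastic matrix similar to $J_n$ has rank $1$, and then the row-sum and column-sum constraints force it to be $J_n$ directly.
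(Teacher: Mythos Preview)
Your proof is correct and follows essentially the same route as the paper's: apply Lemma~2.8 to replace the conjugator by a doubly stochastic $Y$, deduce $B=Y^{-1}J_nY=J_n$ from $J_nY=J_n$ and $Y^{-1}J_n=J_n$ (Lemma~2.7), and then handle $C_n$ by the algebraic manipulation of Corollary~2.6. The paper's proof is terser (it writes only ``Hence $B=J_n$''), but the ingredients are identical; your added rank-$1$ alternative for $J_n$ is a nice bonus not in the paper.
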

\begin{proof} If $B=X^{-1}J_nX$ is doubly stochastic, then by the preceding lemma, there exists $Y\in \Delta_n$ such that $B=Y^{-1}J_nY.$ Hence $B=J_n.$ The rest of proof can be completed by using a similar argument as that of Corollary 2.6.
\end{proof}

 It is easy to see that each symmetric doubly stochastic matrix $D_a$ of trace $a$ which lies on the line-segment joining $I_n$ to $C_n$ has the property that $0\leq a\leq n.$ Also recall that $J_n=\frac{n-1}{n}C_n+\frac{1}{n}I_n$ so that  $J_n$ is on this line-segment $[I_n,C_n].$  With this in mind, we have the following theorem.
 \begin{theorem} Any point $D_a$ that lies on the line-segment  $[I_n,C_n]$  is DS in $ \Delta_n$ and hence it is also DS in $ \Delta_n^s.$
  \end{theorem}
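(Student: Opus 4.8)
The plan is to exploit the extremely rigid eigenstructure that membership in $[I_n,C_n]$ forces on $D_a$. Using the identity $C_n=\frac{n}{n-1}J_n-\frac{1}{n-1}I_n$, every point of the segment $[I_n,C_n]$ can be written as $D_a=\alpha I_n+\beta J_n$ with $\alpha+\beta=1$; comparing traces gives $\alpha=\frac{a-1}{n-1}\in[-\frac{1}{n-1},1]$ and $\beta=1-\alpha$. Since $P^TJ_nP=J_n$ for every permutation matrix $P$, one has $P^TD_aP=D_a$, so a matrix is permutationally similar to $D_a$ precisely when it equals $D_a$. Hence the task reduces to showing: if $X$ is invertible and $M:=X^{-1}D_aX$ is doubly stochastic, then $M=D_a$. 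The case $\beta=0$, i.e.\ $D_a=I_n$, is immediate, so from now on $\beta\neq0$.

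The heart of the argument is a short rank computation that does not even use the nonnegativity of $M$. Because $D_a$ is symmetric it is diagonalizable, with eigenvalue $1$ simple (eigenvector $e_n$) and eigenvalue $\alpha$ of multiplicity $n-1$; therefore $M$, being similar to $D_a$, is also diagonalizable with the same spectrum, and consequently $R:=M-\alpha I_n$ has rank exactly $1$. Now I feed in that $M$ is doubly quasi-stochastic: from $Me_n=e_n$ we get $Re_n=\beta e_n\neq0$, so the one-dimensional column space of $R$ is spanned by $e_n$, which means $R=e_nw^T$ for some vector $w$; evaluating at $e_n$ forces $w^Te_n=\beta$. Likewise $e_n^TM=e_n^T$ gives $e_n^TR=\beta e_n^T$, and since $e_n^TR=(e_n^Te_n)w^T=w^T$ we conclude $w=\beta e_n$. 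Hence $R=\beta e_ne_n^T=\beta J_n$, so $M=\alpha I_n+\beta J_n=D_a$. This proves $D_a$ is DS in $\Delta_n$, and the remaining assertion that $D_a$ is DS in $\Delta_n^s$ is then the general implication recorded in the Introduction (a symmetric doubly stochastic matrix that is DS in $\Delta_n$ is DS in $\Delta_n^s$).

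I do not anticipate a real obstacle: once the problem is set up this way the proof is a two-line piece of linear algebra. The only thing to watch for is the tempting but slightly more awkward alternative of imitating the proofs of Corollaries 2.6 and 2.10, namely writing $X^{-1}J_nX=\frac{1}{\beta}(M-\alpha I_n)$ and quoting Lemma 2.5 (respectively Corollary 2.10); this forces one to check that $\frac1\beta(M-\alpha I_n)$ is nonnegative, which for the diagonal entries is the inequality $M_{ii}\geq\alpha$ — true, but (for non-symmetric $M$) not obviously so without essentially redoing the rank-one argument. It therefore seems cleanest to argue directly as above; as a bonus, this argument in fact shows that $D_a$ is determined by its spectrum already within the larger class of doubly quasi-stochastic matrices.
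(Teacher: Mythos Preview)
Your argument is correct, and it takes a genuinely different route from the paper. The paper splits into the cases $0\le a\le 1$ and $1\le a\le n$, observes that in either case $D_a$ is a positive (hence irreducible) matrix of the form $\gamma J_n+\delta I_n$, and then invokes Lemma~2.8 (a Perron--Frobenius-type result quoted from Minc) to replace the arbitrary conjugator $X$ by a doubly stochastic $Y$; since any doubly stochastic $Y$ satisfies $Y^{-1}J_nY=J_n$, the conclusion $X^{-1}D_aX=D_a$ follows. Your proof bypasses Lemma~2.8 entirely: you read off from the spectrum of $D_a$ that $M-\alpha I_n$ has rank one, and the doubly quasi-stochastic conditions $Me_n=e_n$, $e_n^TM=e_n^T$ then pin down that rank-one matrix as $\beta J_n$. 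This is more elementary---no nonnegativity or irreducibility is used---and, as you note, it actually yields the stronger conclusion that $D_a$ is determined by its spectrum within all doubly quasi-stochastic matrices. One small remark on your closing paragraph: the ``awkward alternative'' you describe (pushing the nonnegativity through to $X^{-1}J_nX$ and quoting Lemma~2.5) is not quite what the paper does; the paper sidesteps that nonnegativity check precisely by appealing to Lemma~2.8, so the two approaches really differ in whether one is willing to import that external lemma or prefers the self-contained rank computation.
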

\begin{proof} We split the proof into two cases.
\begin{itemize}  \item For $0\leq a\leq 1,$ then $D_a$ is a convex combination of $J_n$ and $C_n.$ From the trace of $D_a$, we easily obtain $D_a=aJ_n+(1-a)C_n$ so that
$D_a=aJ_n+(1-a)\frac{n}{n-1}J_n-\frac{1}{n-1}I_n$ or $D_a=\frac{n-a}{n-1}J_n-\frac{1-a}{n-1}I_n.$ Note that $D_a$ is a positive matrix and so it is irreducible. Now if $X$ is an invertible matrix such that $B=X^{-1}D_aX$ is doubly stochastic, then by the preceding lemma, there exists  a doubly stochastic matrix $Y$ such that $B=Y^{-1}D_aY.$ Hence $B=X^{-1}D_aX=Y^{-1}D_aY=\frac{n-a}{n-1}Y^{-1}J_nY-\frac{1-a}{n-1}Y^{-1}I_nY.$ Thus $X^{-1}D_aX=D_a$ and this shows that $D_a$ is DS in $ \Delta_n.$
 \item For $1\leq a\leq n,$ then $D_a$ is a convex combination of $I_n$ and $J_n.$ From the trace of $D_a$, it is easy to see that in this case $D_a=\frac{a-1}{n-1}I_n+(1-\frac{a-1}{n-1})J_n,$  and that the proof can be completed in a similar way to that of the previous case.
     \end{itemize}
\end{proof}

Knowing that the eigenvalues of $C_n$ are given by $(1,-\frac{1}{n-1},...,-\frac{1}{n-1}),$ then we have the following conclusion.
\begin{corollary} Let $\lambda$ be any point that lies on the line-segment $[(1,...,1),(1,-\frac{1}{n-1},...,-\frac{1}{n-1})]$ of $\mathbb{R}^n.$  Then $\lambda$ characterizes a unique element of $\Delta_n^s.$
\end{corollary}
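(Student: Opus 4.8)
The plan is to transfer the conclusion of Theorem 2.11 into spectral language via the spectral mapping theorem. First I would recall from Theorem 2.11 that every point $D_a$ on the line-segment $[I_n,C_n]$ is DS in $\Delta_n^s$, and from the discussion just before that theorem that such $D_a$ is parametrized by its trace $a\in[0,n]$. The key observation is that the map $a\mapsto D_a$ is affine (indeed $D_a=aJ_n+(1-a)C_n$ for $0\le a\le 1$ and $D_a=\frac{a-1}{n-1}I_n+(1-\frac{a-1}{n-1})J_n$ for $1\le a\le n$), hence the spectrum of $D_a$ is an affine function of $a$: since $C_n$, $J_n$, $I_n$ all commute and are simultaneously diagonalized by an orthogonal matrix whose first column is $e_n$, the eigenvalue $1$ is preserved while the remaining $n-1$ eigenvalues of $D_a$ all equal a single value that varies affinely from $-\frac{1}{n-1}$ (at $a=0$, i.e. $D_0=C_n$) to $1$ (at $a=n$, i.e. $D_n=I_n$).

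Next I would make this precise: for $t\in[-\frac{1}{n-1},1]$, the point $\lambda=(1,t,\dots,t)\in\mathbb{R}^n$ is exactly the spectrum of some $D_a$ on $[I_n,C_n]$, because the trace constraint $1+(n-1)t = a$ gives a bijection between $t\in[-\frac1{n-1},1]$ and $a\in[0,n]$, and the set $\{(1,t,\dots,t): t\in[-\frac1{n-1},1]\}$ is precisely the line-segment $[(1,\dots,1),(1,-\frac1{n-1},\dots,-\frac1{n-1})]$ named in the statement. So every $\lambda$ in that segment is realized by a matrix $D_a\in\Delta_n^s$ that is DS in $\Delta_n^s$ (indeed in $\Delta_n$).

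Finally I would argue uniqueness. If $B\in\Delta_n^s$ has spectrum $\lambda=(1,t,\dots,t)$, then $B$ is similar (in fact, orthogonally similar, being symmetric) to $D_a$, so by Theorem 2.11 $B$ is permutationally similar to $D_a$; but $D_a=\alpha I_n+\beta J_n$ for suitable scalars and $P^TJ_nP=J_n$, $P^TI_nP=I_n$ for every permutation matrix $P$, so $B=P^TD_aP=D_a$. Hence $D_a$ is the unique element of $\Delta_n^s$ with spectrum $\lambda$, which is the assertion. The only mild subtlety — and the one place to be careful — is to confirm that the endpoints and the interior of the segment together cover all of $[-\frac1{n-1},1]$ with no gap at $a=1$ (where the two affine pieces meet at $J_n$, whose non-Perron eigenvalues are all $0$); this is immediate from continuity of $a\mapsto D_a$ and the explicit formulas, so there is no real obstacle here — the statement is essentially a corollary-level repackaging of Theorem 2.11 once the spectral mapping is spelled out.
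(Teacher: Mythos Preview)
Your proof is correct and follows the same route as the paper, which simply records that the eigenvalues of $C_n$ are $(1,-\tfrac{1}{n-1},\dots,-\tfrac{1}{n-1})$ and lets the corollary fall out of the preceding theorem (note: that is Theorem~2.10 in the paper's numbering, not~2.11). You usefully make explicit one point the paper leaves tacit---that each $D_a$ is of the form $\alpha I_n+\beta J_n$ and hence fixed by every permutation conjugation, so ``DS'' here yields literal uniqueness rather than merely uniqueness up to permutation, matching the corollary's wording.
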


It should be noted that if two doubly stochastic matrices $A$ and $B$ are DS in $\Delta_n^s$ (or $\Delta_n$) then   their {\it direct sum} $A\oplus B$  may not be DS in $\Delta_n^s.$ To see this, it suffices to check that in $\Delta_{2k}^s,$ the matrices $J_2\oplus J_{2k-2}$ and $J_k\oplus J_k$  have the same spectrum so that they are similar (as they are symmetric). Moreover, for $k\geq 3,$ $\frac{1}{k}$ is an entry of the latter and is not an entry of the first so that they not permutationally similar. However, we have the following.
\begin{theorem} The matrix $C_n\oplus C_n$ is DS in $\Delta_{2n}.$
\end{theorem}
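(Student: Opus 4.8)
The plan is to show that every $B\in\Delta_{2n}$ similar to $C_n\oplus C_n$ is in fact permutationally similar to it, and the engine of the argument is the fact (Corollary 2.9) that $C_n$ is DS in $\Delta_n$. First I would record that the spectrum of $C_n\oplus C_n$ consists of $1$ with multiplicity $2$ together with $-\tfrac{1}{n-1}$ with multiplicity $2n-2$. By Lemma 2.1, $B$ is permutationally similar to a direct sum $B_1\oplus\cdots\oplus B_r$ of irreducible doubly stochastic matrices; since $1$ is a simple eigenvalue of every irreducible doubly stochastic block (Perron--Frobenius), the algebraic multiplicity of $1$ as an eigenvalue of $B$ equals $r$, whence $r=2$. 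Thus $B$ is permutationally similar to $B_1\oplus B_2$ with $B_1$ of order $k$ and $B_2$ of order $2n-k$, both irreducible doubly stochastic.

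The next step is to pin down $k$. Because $1$ is simple in each block and all the remaining eigenvalues of $B_1\oplus B_2$ equal $-\tfrac{1}{n-1}$, the matrix $B_1$ has eigenvalues $1$ and $k-1$ copies of $-\tfrac{1}{n-1}$, so $\mathrm{tr}\,B_1=\tfrac{n-k}{n-1}$; likewise $\mathrm{tr}\,B_2=\tfrac{k-n}{n-1}$. Since a doubly stochastic matrix has nonnegative trace, both of these must be $\ge 0$, which forces $k=n$. Hence $B_1$ and $B_2$ are $n\times n$ doubly stochastic matrices, each having the same spectrum, with multiplicities, as $C_n$.

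Finally I would upgrade ``cospectral with $C_n$'' to ``similar to $C_n$''. Since $B$ is similar to the symmetric matrix $C_n\oplus C_n$, it is diagonalizable; hence $B_1\oplus B_2$ is diagonalizable, and hence so is $B_1$ (its minimal polynomial divides that of $B_1\oplus B_2$, which has distinct roots). A diagonalizable matrix with the spectrum of $C_n$ is similar to $C_n$, so $B_1\in\Delta_n$ is similar to $C_n$; as $C_n$ is DS in $\Delta_n$ by Corollary 2.9, $B_1$ is permutationally similar to $C_n$, and the same holds for $B_2$. Conjugating by the corresponding block-diagonal permutation matrix then shows that $B_1\oplus B_2$, and therefore $B$, is permutationally similar to $C_n\oplus C_n$.

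The routine parts are the trace bookkeeping and the assembling of the two blockwise permutations into a single permutation of $\Delta_{2n}$. The step that needs genuine care, and that I expect to be the main obstacle, is the passage from cospectrality to similarity of $B_1$ with $C_n$: a priori $B_1$ might carry a nontrivial Jordan block at $-\tfrac{1}{n-1}$, and Corollary 2.9 applies only to matrices that are genuinely similar to $C_n$. This is precisely why the hypothesis that $B$ is similar to the symmetric matrix $C_n\oplus C_n$ is exploited, as it forces diagonalizability of $B$ and hence of every block $B_i$.
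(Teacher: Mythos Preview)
Your proof is correct and follows essentially the same route as the paper's: decompose via Lemma~2.1 and Perron--Frobenius into two irreducible doubly stochastic blocks, use the trace constraint (the paper phrases it as ``the traces of $A$ and $B$ are zeroes'') to force each block to be $n\times n$ with the spectrum of $C_n$, and then invoke Corollary~2.9 to conclude each block equals $C_n$. You are in fact more careful than the paper on the one genuinely delicate point: the paper passes directly from ``same spectrum as $C_n$'' to the application of Corollary~2.9 without justifying similarity, whereas you correctly observe that diagonalizability of $B$ (inherited from the symmetric $C_n\oplus C_n$) descends to each block and is what licenses that step.
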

\begin{proof} If  $Z\in \Delta_{2n}$  is similar to $C_n\oplus C_n$  then obviously the spectrum of $Z$ is $(1,1,-1/(n-1),...,-1/(n-1)).$ Since $Z$ is reducible and has the eigenvalue  1 repeated twice, then $Z$ is permutationally similar to a direct sum of two doubly stochastic matrices  $A$ and $ B.$ But the traces of $A$ and $B$ are zeroes so that necessarily the spectrum of $A$ and $B$ is the same and is equal to $(1,-1/(n-1),...,-1/(n-1)).$ By corollary 2.9, $A=B=C_n.$
\end{proof}

Using a virtually identical proof to that of the preceding theorem, we conclude by mathematical induction  the following.
\begin{corollary} For any positive integers $n_1, ..., n_k,$ the matrix  $C_{n_1}\oplus ...\oplus C_{n_k}$ is DS in $\Delta_{n_1+...+n_k}.$
\end{corollary}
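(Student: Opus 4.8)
The plan is to argue by induction on $k$, following the template of the proof of Theorem~2.13. The base case $k=1$ is exactly the assertion of Corollary~2.9 that $C_{n_1}$ is DS in $\Delta_{n_1}$. For the inductive step, assume the statement for any direct sum of $k-1$ of the $C_{n_i}$'s, and let $Z\in\Delta_N$, with $N=n_1+\cdots+n_k$, be similar to $S:=C_{n_1}\oplus\cdots\oplus C_{n_k}$. Since $S$ is symmetric it is diagonalizable, hence so is $Z$, and $Z$ has the same spectrum as $S$: the eigenvalue $1$ with multiplicity $k$, and $-\frac{1}{n_i-1}$ with multiplicity $n_i-1$ for each $i$; in particular $\operatorname{tr}(Z)=\sum_i\operatorname{tr}(C_{n_i})=0$. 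By Lemma~2.1, $Z$ is permutationally similar to a direct sum $B_1\oplus\cdots\oplus B_r$ of irreducible doubly stochastic matrices. By the Perron--Frobenius theorem each $B_j$ has $1$ as a simple eigenvalue, so the multiplicity of $1$ as an eigenvalue of $Z$ equals $r$; since that multiplicity is $k$, we get $r=k$. Since each $B_j$ is nonnegative, $\operatorname{tr}(B_j)\ge 0$, and $\sum_j\operatorname{tr}(B_j)=\operatorname{tr}(Z)=0$ forces $\operatorname{tr}(B_j)=0$ for every $j$; hence each $B_j$ has zero diagonal and order $m_j\ge 2$ (a $1\times1$ block would have trace $1$). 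Finally, each $B_j$ is diagonalizable, being a direct summand of the diagonalizable matrix $Z$.

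The heart of the argument is the claim that each $B_j$ equals $C_{m_j}$. Granting it, $B_1=C_m$ for some $m\ge 2$, and since the spectrum of $B_1$ is a sub-multiset of that of $Z$ we must have $m=n_i$ for some $i$, say $m=n_k$ after reindexing. Then $B_2\oplus\cdots\oplus B_k$ is doubly stochastic and similar to $C_{n_1}\oplus\cdots\oplus C_{n_{k-1}}$ (both are diagonalizable with the same spectrum), so by the inductive hypothesis it is permutationally similar to the latter, and therefore $Z$ is permutationally similar to $C_{n_1}\oplus\cdots\oplus C_{n_k}$, which closes the induction. To pass from ``$B_j$ has the spectrum of $C_{m_j}$'' to ``$B_j=C_{m_j}$'' one uses that $B_j$ is diagonalizable, hence similar to $C_{m_j}$, hence equal to it by Corollary~2.9.

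Toward the claim, every non-Perron eigenvalue of $B_j$ is one of the numbers $-\frac{1}{n_i-1}$, and thus lies in $[\,-\frac{1}{n_{\min}-1},\,-\frac{1}{n_{\max}-1}\,]$, where $n_{\min}$ and $n_{\max}$ denote the least and greatest of the $n_i$. Since $B_j$ has exactly $m_j-1$ non-Perron eigenvalues and they sum to $\operatorname{tr}(B_j)-1=-1$, comparing with the two endpoints gives $n_{\min}\le m_j\le n_{\max}$. In the special case where all the $n_i$ equal a common value $n$, this already yields $m_j=n$ and forces every non-Perron eigenvalue of $B_j$ to equal $-\frac{1}{n-1}$, so $B_j$ has the spectrum of $C_n$, and the corollary follows exactly as in Theorem~2.13.

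The main obstacle is the general case: when the $n_i$ are not all equal, the bound $n_{\min}\le m_j\le n_{\max}$ does not by itself force $B_j$ to be a single $C_m$, and one must rule out the possibility that a zero-diagonal irreducible doubly stochastic matrix has its non-Perron eigenvalues split among two or more of the values $-\frac{1}{n_i-1}$. Establishing this impossibility --- presumably through a closer study of zero-trace irreducible doubly stochastic matrices (for instance via their convex expansion in derangement permutation matrices, or via the constraints that the elementary symmetric functions of the spectrum impose on a nonnegative matrix) --- is the step I expect to be the principal difficulty.
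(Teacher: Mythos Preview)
Your approach mirrors the paper's own: the paper simply asserts that the corollary follows from Theorem~2.12 ``by a virtually identical proof \ldots\ by mathematical induction,'' which is exactly the template you set up. You have, however, put your finger on a genuine obstruction that the paper glosses over. When the $n_i$ are not all equal, the zero-trace condition on each irreducible block $B_j$ does \emph{not} force the non-Perron eigenvalues of $B_j$ to be constant, and the ``impossibility'' you hope to establish in your final paragraph is in fact false.

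Here is an explicit counterexample to the corollary itself. Take $n_1=n_2=3$ and $n_3=5$, so that $C_3\oplus C_3\oplus C_5\in\Delta_{11}^s$ has spectrum $\bigl(1,1,1,-\tfrac12,-\tfrac12,-\tfrac12,-\tfrac12,-\tfrac14,-\tfrac14,-\tfrac14,-\tfrac14\bigr)$. Now set
\[
A=\left(\begin{array}{cccc}0&3/8&1/4&3/8\\3/8&0&3/8&1/4\\1/4&3/8&0&3/8\\3/8&1/4&3/8&0\end{array}\right)\in\Delta_4^s .
\]
Writing $A=\left(\begin{smallmatrix}P&Q\\Q&P\end{smallmatrix}\right)$ with $P=\left(\begin{smallmatrix}0&3/8\\3/8&0\end{smallmatrix}\right)$ and $Q=\left(\begin{smallmatrix}1/4&3/8\\3/8&1/4\end{smallmatrix}\right)$, the eigenvalues of $A$ are those of $P+Q$ and $P-Q$, namely $1,\,-\tfrac12,\,-\tfrac14,\,-\tfrac14$. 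Hence $A\oplus A\oplus C_3\in\Delta_{11}^s$ has exactly the same spectrum as $C_3\oplus C_3\oplus C_5$, and since both are symmetric they are similar. But they are not permutationally similar: $3/8$ is an entry of the former and not of the latter (equivalently, their irreducible block sizes are $\{4,4,3\}$ versus $\{3,3,5\}$). Thus $C_3\oplus C_3\oplus C_5$ is not DS in $\Delta_{11}$, nor even in $\Delta_{11}^s$, and the corollary is false as stated. Your instinct that the mixed-$n_i$ case was the ``principal difficulty'' was exactly right --- it cannot be overcome. The argument of Theorem~2.12 extends only to the case where all $n_i$ are equal (your third paragraph), which is precisely what your trace bound $n_{\min}\le m_j\le n_{\max}$ handles cleanly.
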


Our next result is concerned with some other doubly stochastic matrices that are DS in  $\Delta_{2n}.$  For this purpose, we introduce the following notations. In $\Delta_{2n}^s,$ define $I=\left ( \begin{array}{cc} 0&I_n\\I_n&0\\ \end{array} \right ),$  $J=\left ( \begin{array}{cc} 0&J_n\\J_n&0\\ \end{array} \right )$ and $C=\left ( \begin{array}{cc} 0&C_n\\C_n&0\\ \end{array} \right )$ then it can be easily checked that $C=\frac{n}{n-1}J-\frac{1}{n-1}I$ so that $J$ belongs to the line-segment $[I,C].$
With this in mind, we conclude with the following result.
\begin{theorem}  Any point on the line-segment $[I,C]$ is DS in $\Delta_{2n}$  and hence in $\Delta_{2n}^s.$
\end{theorem}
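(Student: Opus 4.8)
My plan is to mimic the proofs of Theorems 2.10 and 2.12. First I would pin down the shape of a point $D$ on $[I,C]$. Since $C=\frac n{n-1}J-\frac1{n-1}I$, every $D\in[I,C]$ is $D=\theta I+(1-\theta)C$ for some $\theta\in[0,1]$, so $D=\left(\begin{smallmatrix}0&M\\ M&0\end{smallmatrix}\right)$ with $M=\theta I_n+(1-\theta)C_n$; writing $M=\alpha I_n+(1-\alpha)J_n$ with $\alpha=\frac{\theta n-1}{n-1}\in[-\frac1{n-1},1]$, one gets $D^2=M^2\oplus M^2$ and $M^2=\alpha^2I_n+(1-\alpha^2)J_n$. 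Hence $M$ and $M^2$ both lie on $[I_n,C_n]$, so by Theorem 2.10 they are DS in $\Delta_n$, and $\mathrm{spec}(D)=\{1,-1,\alpha^{(n-1)},(-\alpha)^{(n-1)}\}$.

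Next, given $B\in\Delta_{2n}$ similar to $D$, I would argue as follows. If $\theta=1$ then $D=I$ is a permutation matrix and Theorem 2.3 finishes it. Otherwise $1$ is a simple eigenvalue of $B$, so, since a doubly stochastic matrix decomposes by Lemma 2.1 into irreducible summands each carrying the eigenvalue $1$ exactly once, $B$ is irreducible; since its only unit--modulus eigenvalues are $\pm1$, Lemma 2.2 yields, after a permutation similarity, $B=\left(\begin{smallmatrix}0&B_1\\ B_2&0\end{smallmatrix}\right)$ with $B_1,B_2\in\Delta_n$. Then $B^2=B_1B_2\oplus B_2B_1$ is diagonalizable (as $B$ is, being similar to the symmetric $D$) with the spectrum of $M^2\oplus M^2$; since $B_1B_2$ and $B_2B_1$ are cospectral and each is doubly stochastic, each has spectrum $\mathrm{spec}(M^2)$, hence is a diagonalizable matrix similar to $M^2$, hence permutationally similar to $M^2$ by Theorem 2.10; absorbing these permutations into $B$, I may assume $B_1B_2=B_2B_1=M^2$.

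The hard step is the last one: passing from $B_1B_2=B_2B_1=M^2$ to $\left(\begin{smallmatrix}0&B_1\\ B_2&0\end{smallmatrix}\right)$ being permutationally similar to $\left(\begin{smallmatrix}0&M\\ M&0\end{smallmatrix}\right)$. I do not see how to do this, and in fact it cannot be done: the product $B_1B_2$ does not recover the factors. Concretely, whenever $\alpha\le\frac1{n-1}$ (that is, $\theta\le\frac2n$) the matrix $M':=-\alpha I_n+(1+\alpha)J_n$ is again a symmetric doubly stochastic matrix with $(M')^2=\alpha^2I_n+(1-\alpha^2)J_n=M^2$, so $B:=\left(\begin{smallmatrix}0&M'\\ M'&0\end{smallmatrix}\right)\in\Delta_{2n}^s$ is similar to $D$ (equal eigenvalues, both symmetric); but $M'$ has off-diagonal entries $\frac{1+\alpha}n$ whereas $M$ has off-diagonal entries $\frac{1-\alpha}n$, so for $\alpha\ne0$ the entry multisets of $B$ and $D$ differ and they are not permutationally similar. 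For instance, for $n=3$ and $D$ the midpoint of $[I,C]$ one has $M=\frac12I_3+\frac12C_3$ and $M'=-\frac14I_3+\frac54J_3$, both in $\Delta_3^s$ with $M^2=(M')^2=\frac1{16}I_3+\frac{15}{16}J_3$. Hence for every $n\ge3$ and every $\theta\in[0,\frac2n]\setminus\{\frac1n\}$ --- in particular for $D=C$ --- the point $D$ is not DS in $\Delta_{2n}$ or in $\Delta_{2n}^s$, so the statement as written is false; only $D=I$ and $D=J$ (the case $\alpha=0$, where a rank argument forces $B_1=B_2=J_n$) survive this approach, and the range $\theta\in(\frac2n,1]$, where the above obstruction disappears, would need a different argument.
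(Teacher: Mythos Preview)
Your analysis is correct, and in fact you have uncovered an error in the paper: the theorem is \emph{false} for $n\ge 3$. Your counterexample is valid. With $M=\alpha I_n+(1-\alpha)J_n$ and $M'=-\alpha I_n+(1+\alpha)J_n$ (both in $\Delta_n^s$ whenever $|\alpha|\le\frac1{n-1}$), the matrices
\[
D=\begin{pmatrix}0&M\\ M&0\end{pmatrix},\qquad B=\begin{pmatrix}0&M'\\ M'&0\end{pmatrix}
\]
are symmetric with common spectrum $\{1,-1,\alpha^{(n-1)},(-\alpha)^{(n-1)}\}$, hence similar; yet for $n\ge 3$ and $\alpha\neq 0$ their entry multisets differ (the off-diagonal value $\frac{1-\alpha}{n}$ of $M$ versus $\frac{1+\alpha}{n}$ of $M'$), so they are not permutationally similar. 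In particular $D=C$ (the case $\alpha=-\frac1{n-1}$) is not DS in $\Delta_{2n}^s$. For $n=2$ the construction collapses: there one has $M'=C_2M$, and conjugating $D$ by $\left(\begin{smallmatrix}C_2&0\\0&I_2\end{smallmatrix}\right)$ yields $B$, so no counterexample arises.

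The paper's proof breaks exactly where you sensed trouble. It asserts that ``a similar proof to that of Corollary~2.6 shows that $C$ is also DS'' and then that ``a similar argument to that of Theorem~2.10'' handles the whole segment. Both of those earlier arguments rest on the identity $Y^{-1}I_nY=I_n$ (and $Y^{-1}J_nY=J_n$ for doubly stochastic $Y$): one writes $D_a=\beta I_n+\gamma J_n$ and conjugation fixes each summand. Here the role of $I_n$ is played by $I=\left(\begin{smallmatrix}0&I_n\\ I_n&0\end{smallmatrix}\right)$, which is \emph{not} invariant under conjugation by an arbitrary $Y\in\Delta_{2n}$, so from $Z=Y^{-1}(\beta I+\gamma J)Y$ one cannot separate the two pieces. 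Knowing that $I$ and $J$ are individually DS (which the paper does establish correctly) does not help, because DS is not preserved under linear combinations. Your counterexample shows the gap is fatal rather than merely cosmetic: two distinct points of the segment $[I,C]$, symmetric about $J$, are always cospectral, and for $n\ge 3$ they are not permutationally similar. Thus the only points on $[I,C]$ that the paper's argument actually secures are $I$ and $J$ themselves.
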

\begin{proof} We first prove that $J$ and  $I$ are DS in $\Delta_{2n}.$ For, if there exists $Z\in \Delta_{2n}$ which is similar to $J,$ then obviously the spectrum of $Z$ is $(1,0,...,0,-1)\in\mathbb{R}^{2n} .$ By Lemma 2.2, $Z$ is permutationally similar to a matrix of the form $\left ( \begin{array}{cc} 0&D\\D^T&0\\ \end{array} \right )$ where $D\in \Delta_n. $ So that $DD^T\in \Delta_n^s$  and its eigenvalues are $(1,0,...,0)$ and therefore $DD^T$ is similar to $J_n.$   By Lemma 2.5, $DD^T=J_n$ and since rank($D$)=rank($DD^T$)=rank($J_n$)=1, then $D=J_n$ and therefore $Z=J.$

Now suppose that $S\in \Delta_{2n}$ is similar to $I,$ then $S$ has spectrum
 $(\underbrace{1,...,1}_{\text{n times}},\underbrace{-1,...,-1}_{\text{n times}})$ and therefore $S$ is permutationally similar to                    $\underbrace{C_2\oplus ...\oplus C_2}_{\text{n times}}$ but this in turn means that $I$ and $S$ are permutationally similar. Since $C=\frac{n}{n-1}J-\frac{1}{n-1}I$ then a similar proof to that of Corollary 2.6 shows that $C$ is also DS in $\Delta_{2n}.$ Finally, using a similar argument to that of Theorem 2.10, the proof can be easily completed.
\end{proof}

Recall that two matrices are {\it cospectral} if they have the same spectra.
We conclude this section by proving that  a symmetric doubly stochastic matrix that is DS in  $\Delta_n$ may be cospectral to another element of  $\Delta_n.$ But first, we need the following result for which the proof can be found in \cite{zh}.
\begin{lemma} Let $M=\left ( \begin{array}{cc} A&B\\C&D\\ \end{array} \right )$ where $A$ and $D$ are square. If $AC=CA,$
 then  $$\det(M)=\det(AD-CB).$$
\end{lemma}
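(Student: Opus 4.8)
The plan is to prove the identity first under the additional assumption that $A$ is invertible, and then to remove that hypothesis by a routine perturbation argument. Before starting, note that for the product $AC$ to be defined the blocks $A$ and $C$ must be square of the same order, and similarly for $D$; thus $A,B,C,D$ are all $n\times n$, $M$ is $2n\times 2n$, and $AD-CB$ is $n\times n$, so that every quantity appearing in the statement is meaningful.

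Assume first that $A$ is invertible. Then one has the block factorization
$$\left ( \begin{array}{cc} A&B\\C&D\\ \end{array} \right )=\left ( \begin{array}{cc} A&0\\C&I_n\\ \end{array} \right )\left ( \begin{array}{cc} I_n&A^{-1}B\\0&D-CA^{-1}B\\ \end{array} \right ),$$
which is checked by multiplying out the right-hand side. Each of the two factors on the right is block-triangular, so taking determinants gives $\det(M)=\det(A)\det(D-CA^{-1}B)=\det\big(AD-ACA^{-1}B\big)$. Now the hypothesis $AC=CA$ gives $ACA^{-1}=CAA^{-1}=C$, and hence $\det(M)=\det(AD-CB)$, which is the desired identity in this case.

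To handle the general case, I would use a continuity (polynomial identity) argument. For $t\in\mathbb{R}$ set $A_t=A+tI_n$. Since $A$ has only finitely many eigenvalues, $A_t$ is invertible for all but finitely many $t$; moreover $A_tC=AC+tC=CA+tC=CA_t$, so the commutativity hypothesis continues to hold with $A$ replaced by $A_t$. Applying the invertible case to $M_t=\left ( \begin{array}{cc} A_t&B\\C&D\\ \end{array} \right )$ gives $\det(M_t)=\det(A_tD-CB)$ for all $t$ with $A_t$ invertible. Both sides are polynomial functions of $t$ --- the entries of $M_t$ and of $A_tD-CB$ are affine in $t$, and a determinant is a polynomial in the entries --- and they agree at infinitely many values of $t$, hence they are equal for every $t$. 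Evaluating at $t=0$ yields $\det(M)=\det(AD-CB)$.

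The computation in the invertible case is entirely routine; the only step requiring any care is the reduction to that case, where one must observe that adding a scalar matrix $tI_n$ to $A$ preserves the relation $AC=CA$ (because $I_n$ is central), and that only finitely many values of the perturbation parameter need to be discarded. A completely different route --- a direct cofactor expansion along the first $n$ columns --- is also possible but is messier, so the factorization argument above is the one I would adopt.
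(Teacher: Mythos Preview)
Your argument is correct in every detail: the block factorization yields the Schur complement formula $\det(M)=\det A\cdot\det(D-CA^{-1}B)$, the commutation hypothesis then gives $\det(AD-CB)$, and the perturbation $A\mapsto A+tI_n$ (which preserves commutation with $C$) removes the invertibility assumption by a polynomial-identity argument. The only remark I would add is that your opening observation is not just a convenience but a logical necessity: without $A$ and $C$ being square of the same size the very expressions $AC$ and $CA$ are undefined, so the lemma implicitly forces all four blocks to be $n\times n$.

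As for comparison with the paper: the paper does not supply a proof at all, but simply cites Zhang's textbook \cite{zh}. The proof you have written is in fact the standard one that appears there (and in most treatments of this identity), so there is no meaningful difference in approach to discuss.
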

\begin{theorem} Let $A$ be in $\Delta_n$ and define $M=\left ( \begin{array}{cc} 0&J_n\\A&0\\ \end{array} \right )$ and let $J=\left ( \begin{array}{cc} 0&J_n\\J_n&0\\ \end{array} \right )$ be  as defined earlier. Then $M$ and $J$ are cospectral.
\end{theorem}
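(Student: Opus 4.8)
The plan is to show that $M$ and $J$ have the same characteristic polynomial, by computing both via the block-determinant formula of Lemma 2.15. Write $p_M(x)=\det(xI_{2n}-M)$. Since
$$xI_{2n}-M=\left ( \begin{array}{cc} xI_n&-J_n\\-A&xI_n\\ \end{array} \right ),$$
I would apply Lemma 2.15 with the top-left block $xI_n$ playing the role of ``$A$'' and the bottom-left block $-A$ playing the role of ``$C$''. The commutativity hypothesis ``$AC=CA$'' is automatic here because $xI_n$ is scalar and commutes with every $n\times n$ matrix, so the lemma yields
$$p_M(x)=\det\big((xI_n)(xI_n)-(-A)(-J_n)\big)=\det(x^2I_n-AJ_n).$$

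Next I would invoke the characterization recalled in the introduction that a doubly quasi-stochastic matrix $A$ satisfies $AJ_n=J_n$; in particular this holds for $A\in\Delta_n$. Substituting gives $p_M(x)=\det(x^2I_n-J_n)$, and this expression no longer depends on $A$.

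Finally, since $J$ is exactly the matrix $M$ in the special case $A=J_n$, the same computation gives $p_J(x)=\det(x^2I_n-J_n^2)$, and using the idempotence $J_n^2=J_n$ we get $p_J(x)=\det(x^2I_n-J_n)=p_M(x)$. Hence $M$ and $J$ have identical characteristic polynomials and are therefore cospectral.

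There is essentially no hard step here: the only points to be careful about are (i) that the hypothesis of Lemma 2.15 is satisfied, which is immediate because one of the relevant blocks is a scalar multiple of the identity, and (ii) the two elementary identities $AJ_n=J_n$ (valid for any doubly quasi-stochastic $A$) and $J_n^2=J_n$. If one prefers to avoid even the phrase ``characteristic polynomial'' for a possibly non-diagonalizable $M$, one can equivalently note that the computation above shows $\det(xI_{2n}-M)=\det(xI_{2n}-J)$ as polynomials in $x$, which is precisely the statement that the two matrices are cospectral.
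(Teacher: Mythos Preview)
Your proof is correct and follows essentially the same route as the paper's: apply the block-determinant identity of Lemma~2.15 (commutativity being trivial since one block is $xI_n$), reduce $p_M(x)$ to $\det(x^2I_n-AJ_n)=\det(x^2I_n-J_n)$ using $AJ_n=J_n$, and observe that $p_J(x)$ gives the same expression via $J_n^2=J_n$. The only difference is that you are slightly more explicit about verifying the commutativity hypothesis and the idempotence of $J_n$.
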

\begin{proof} The characteristic polynomial of $M$ is given by $p_M(\lambda)=\det\left ( \begin{array}{cc} -\lambda I_n&J_n\\A&-\lambda I_n\\ \end{array} \right ).$ By the preceding lemma, $p_M(\lambda)=\det(\lambda^2I_n-AJ_n)=\det(\lambda^2I_n-J_n)$ i.e. $\lambda^2$ is an eigenvalue of $J_n.$ On the other hand,  $p_J(\lambda)=\det(\lambda^2I_n-J_nJ_n)=\det(\lambda^2I_n-J_n).$ Thus $M$ and $J$ are cospectral.
\end{proof}

\section{ Particular cases}

Recall  that Birkhoff's theorem states that $\Delta_n$ is a convex polytope of dimension $(n-1)^2$ where
its vertices are the $n\times n$ permutation matrices.
On the other hand, $\Delta_n^s$ is a convex polytope of dimension $\frac{1}{2}n(n-1)$, and its
vertices were determined in ~\cite{kat,cru} where it
is proved that if $A$ is a vertex of $\Delta_n^s$, then
 $A= \frac{1}{2}(P+P^T)$ for some permutation matrix $P$, although not every
 $\frac{1}{2}(P+P^T)$ is a vertex.

\subsection{ The case $n=2$}
 It is easy to see $\Delta_2=\Delta_2^s$ i.e. every $2\times 2$ doubly stochastic matrix is necessarily symmetric. Moreover, $\Delta_2$ is the line-segment joining $I_2$ to $C_2.$ So that every $2\times 2$ doubly stochastic matrix is determined by its spectra and every point of the line-segment joining $[(1,1),(1,-1)]$ characterizes a unique $2\times 2$  doubly stochastic matrix.

\subsection{ The case $n=3$}
Here we solve completely Problem 1.3 and hence Problem 1.4 for the case $n=3.$

 The convex polytope  $\Delta^s_3$ sits in
 the  6-dimensional vector space of all $3\times 3$ real symmetric matrices, and
following ~\cite{kat, cru} $\Delta^s_3$  is the convex hull
of the following matrices:  \\
$$I_3, \mbox{ }
X=\left ( \begin{array}{ccc} 1&0&0\\0&0&1\\0&1&0\\ \end{array} \right ), \mbox{ }
 Y=\left ( \begin{array}{ccc} 0&0&1\\0&1&0\\1&0&0\\ \end{array} \right ), \mbox{ }
Z=\left ( \begin{array}{ccc} 0&1&0\\1&0&0\\0&0&1\\ \end{array} \right ), \mbox{ }
 C_3=\left ( \begin{array}{ccc} 0&1/2&1/2\\1/2&0&1/2\\1/2&1/2&0\\
 \end{array} \right ).$$

Our main observation is the following:
\begin{lemma} $J_3=\frac{1}{3}(X+Y+Z)$, $C_3=\frac{3}{2}J_3-\frac{1}{2}I_3$
 and the triangle $ XYZ$ is equilateral with respect to the Frobenius norm.
 In addition, $I_3C_3$ is an axis of symmetry for
$\Delta_3^s$, and every point on $I_3C_3$ commutes with all other points
 in $\Delta_3^s$.
 \end{lemma}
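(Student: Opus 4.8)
The plan is to settle the three algebraic identities by inspection and then to read off the two geometric claims from the action of the conjugation-by-permutations group. For the first identity I would add $X$, $Y$ and $Z$ entrywise: the sum is the $3\times3$ matrix of all ones, so $\tfrac13(X+Y+Z)=J_3$. The identity $C_3=\tfrac32 J_3-\tfrac12 I_3$ is simply the $n=3$ case of the relation $C_n=\tfrac{n}{n-1}J_n-\tfrac1{n-1}I_n$ already used in the proof of Corollary 2.6 (or, again, a one-line entrywise check). For the equilateral assertion, observe that $X$, $Y$, $Z$ are exactly the symmetric permutation matrices of the transpositions $(2\,3)$, $(1\,3)$, $(1\,2)$; I would compute $\|X-Y\|_F^2=6$ once and then note that conjugation by a permutation matrix is a Frobenius isometry permuting $\{X,Y,Z\}$ transitively, which forces the three side lengths to be equal (or simply compute all three).

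For the axis-of-symmetry statement, let $G$ be the group of linear maps $A\mapsto P^TAP$ on the $6$-dimensional space of symmetric $3\times3$ matrices, with $P$ ranging over the $3\times3$ permutation matrices. Each such map is a Frobenius isometry (cyclicity of the trace together with $PP^T=I_3$), it fixes $I_3$ and $C_3$ (for $C_3$ use $P^TJ_3P=J_3$ and the identity just proved, or note that a simultaneous row/column permutation preserves the zero-diagonal, constant-off-diagonal pattern), and it permutes $\{X,Y,Z\}$ via the action of the symmetric group on its set of transpositions, which is the full $S_3$. Hence every element of $G$ maps the vertex set $\{I_3,X,Y,Z,C_3\}$ of $\Delta_3^s$ onto itself and is therefore an isometric symmetry of $\Delta_3^s$, while fixing the line $I_3C_3$ pointwise. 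Choosing $P$ to be a transposition matrix yields an involution in $G$ fixing one of $X,Y,Z$ and interchanging the other two; decomposing this involutive isometry into its $\pm1$-eigenspaces shows that $I_3-C_3$ is Frobenius-orthogonal to $X-Y$ and to $Y-Z$, so $I_3C_3$ is the perpendicular line through the centroid $J_3=\tfrac13(X+Y+Z)$ of the equilateral triangle $XYZ$, about which the elements of $G$ realize the full dihedral symmetry; this is precisely what it means for $I_3C_3$ to be an axis of symmetry of $\Delta_3^s$. I expect this to be the one genuinely non-mechanical step, the care being in identifying the witnessing isometry group and checking that its members simultaneously stabilize $\Delta_3^s$ and fix the line $I_3C_3$.

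Finally, for the commuting claim, the identity $C_3=\tfrac32 J_3-\tfrac12 I_3$ shows that every point of the segment $[I_3,C_3]$ is a real linear combination of $I_3$ and $J_3$. Since $I_3$ commutes with everything and $J_3$ commutes with every doubly stochastic matrix---indeed $AJ_3=J_3A=J_3$ for any doubly quasi-stochastic $A$, as recalled in the Introduction---such a linear combination commutes with every element of $\Delta_3^s$, which finishes the argument.
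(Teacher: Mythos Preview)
Your argument is correct in every part: the entrywise checks for $J_3=\tfrac13(X+Y+Z)$ and $C_3=\tfrac32 J_3-\tfrac12 I_3$ are immediate, the equilateral claim follows from one distance computation plus the transitive permutation action, the axis-of-symmetry claim is cleanly obtained from the conjugation-by-permutations group acting as Frobenius isometries that fix $I_3,C_3$ while realizing the full $S_3$ on $\{X,Y,Z\}$ (your eigenspace argument for the orthogonality of $I_3-C_3$ to the plane of the triangle is the right way to make this precise; strictly speaking you use two different transpositions to get orthogonality to both $X-Y$ and $Y-Z$, which is what you intend), and the commuting claim follows at once from writing points of $[I_3,C_3]$ as real combinations of $I_3$ and $J_3$.

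As for comparison: the paper does not actually supply a proof of this lemma. It is presented as ``our main observation'' and then used without justification, the claims being left as routine verifications. Your write-up therefore fills in exactly the details the paper omits, and does so along the lines one would expect; the only place you go beyond a bare computation is the symmetry-group/eigenspace argument for the axis, which is more conceptual than a direct coordinate check but is arguably the cleanest way to see why $I_3C_3$ deserves to be called an axis of symmetry.
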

Thus it is clear from the preceding lemma that $\Delta_3^s$ is 3-dimensional and has the shape seen
 in Figure \ref{fig: my fig1}.

\begin{figure}
\begin{center}
\includegraphics[angle=270,scale=0.4]{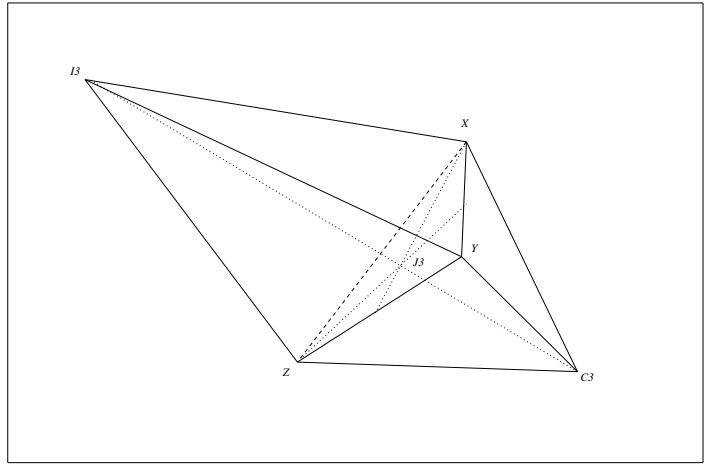}
\caption{{\footnotesize The shape of $\Delta_3^s $.}}
\label{fig: my fig1}
\end{center}
\end{figure}

Our next goal is to prove that the only symmetric doubly stochastic matrices of $\Delta_3^s(1)$ (which is the closed triangle $XYZ$) that are DS in $\Delta_3^s$  are $J_3$ and its vertices  $X,$  $Y$ and $Z.$

Using Maple for example, it is easy to check the following lemma.
\begin{lemma} For $0\leq x \leq 1$ and  $0\leq y\leq 1$ with $0\leq x+y\leq 1,$ the symmetric doubly stochastic matrix $A=xX+yY+(1-x-y)Z$ has eigenvalues
$$\left ( 1,\sqrt{3x^2+3y^2+3xy+1-3x-3y},-\sqrt{3x^2+3y^2+3xy+1-3x-3y}\right ).$$
\end{lemma}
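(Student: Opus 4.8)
The plan is a direct computation, organized so as to avoid any appeal to software. First I would record the explicit form of the matrix: writing $z=1-x-y$ and using the displayed descriptions of $X,Y,Z$, one obtains
$$A=xX+yY+zZ=\begin{pmatrix} x & z & y \\ z & y & x \\ y & x & z \end{pmatrix},$$
which is manifestly symmetric, and is nonnegative (hence lies in $\Delta_3^s$) precisely because of the hypotheses $0\le x\le 1$, $0\le y\le 1$, $0\le x+y\le 1$. Since $A$ is doubly stochastic, $e_3$ is an eigenvector with eigenvalue $1$; since $A$ is symmetric, its remaining two eigenvalues are real, and together with the eigenvalue $1$ they sum to $\operatorname{tr}A=x+y+z=1$. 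Hence they are $\mu$ and $-\mu$ for some real $\mu$, and it only remains to identify $\mu^2$.

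For this I would use the elementary fact that for a symmetric matrix the sum of the squares of the eigenvalues equals the sum of the squares of all the entries, i.e. $1+\mu^2+\mu^2=\operatorname{tr}(A^2)=3(x^2+y^2+z^2)$. Therefore $\mu^2=\tfrac12\bigl(3(x^2+y^2+z^2)-1\bigr)$, and substituting $z=1-x-y$ and expanding is then a one-line algebraic simplification yielding $\mu^2=3x^2+3y^2+3xy+1-3x-3y$, which is exactly the quantity under the radical in the statement. As a sanity check, at $x=y=\tfrac13$ one gets $\mu=0$, recovering the spectrum $(1,0,0)$ of $J_3$, and at $(x,y)=(1,0)$ one gets $\mu=1$, recovering the spectrum $(1,1,-1)$ of the permutation matrix $X$.

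There is essentially no obstacle here, the whole argument being a structured computation; the only point worth a remark is that the square root is real, i.e. that $3x^2+3y^2+3xy+1-3x-3y\ge 0$ on the triangle. This is automatic since $A$ is a real symmetric matrix, but it can also be read off directly from $\mu^2=\tfrac12(3(x^2+y^2+z^2)-1)$ together with $x^2+y^2+z^2\ge\tfrac13(x+y+z)^2=\tfrac13$. Alternatively one could bypass $\operatorname{tr}(A^2)$ and compute $\det A=3xyz-x^3-y^3-z^3=-(x^2+y^2+z^2-xy-yz-zx)$ (using $x+y+z=1$ in the standard factorization of $x^3+y^3+z^3-3xyz$); since the eigenvalues are $1,\mu,-\mu$ one has $\det A=-\mu^2$, giving the same formula after the same substitution.
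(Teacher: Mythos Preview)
Your argument is correct. The paper itself does not give a proof of this lemma at all: it simply states that the formula ``is easy to check'' using Maple. Your approach is therefore strictly more informative. By exploiting the structure---$A$ is symmetric doubly stochastic with trace $1$, so the non-Perron eigenvalues are $\pm\mu$, and then $\mu^2$ is read off from either $\operatorname{tr}(A^2)=3(x^2+y^2+z^2)$ or $\det A=3xyz-x^3-y^3-z^3$---you produce a clean, software-free derivation; the substitution $z=1-x-y$ indeed yields $\mu^2=3x^2+3y^2+3xy+1-3x-3y$ in both routes. Your remark that $\mu^2\ge 0$ follows automatically from symmetry (or from $x^2+y^2+z^2\ge\tfrac13$) also anticipates the nonnegativity claim that the paper establishes separately in the next lemma.
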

Now if we let the {\it domain} $D$ be defined by $0\leq x \leq 1,$   $0\leq y\leq 1$ and $0\leq x+y\leq 1,$ then it is easy to see that $D$ is the closed triangle whose vertices are $O=(0,0), A=(1,0)$ and $B=(0,1).$   Define the function $f$ over $D$ by: $$f(x,y)=3x^2+3y^2+3xy+1-3x-3y.$$
 Concerning the function $f,$ we have the following.
\begin{lemma} Over the domain $D,$ the function $f$ has zero as  absolute minimum and 1 as an absolute maximum. Thus over the domain $D,$ we have
 $0\leq 3x^2+3y^2+3xy+1-3x-3y\leq 1.$
\end{lemma}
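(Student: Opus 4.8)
The plan is to treat $f$ as a continuous function on the compact triangle $D$ and locate its extrema by the standard two-step routine: first find the interior critical points, then analyse the behaviour of $f$ on the boundary $\partial D$, which is the union of the three edges $OA$, $OB$ and $AB$.

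First I would compute the gradient, $\partial f/\partial x=6x+3y-3$ and $\partial f/\partial y=3x+6y-3$. Setting both equal to zero and subtracting the two equations forces $x=y$, and then $9x-3=0$, so the unique interior critical point is $(1/3,1/3)$, at which $f(1/3,1/3)=0$. The Hessian of $f$ is the constant matrix with diagonal entries $6$ and off-diagonal entries $3$; it is positive definite (trace $12>0$, determinant $27>0$), so $f$ is a strictly convex quadratic on all of $\mathbb{R}^2$ and $(1/3,1/3)$ is its global minimiser. This already yields that $0$ is the absolute minimum of $f$ on $D$ (indeed on all of $\mathbb{R}^2$), attained at the interior point $(1/3,1/3)$, which is moreover the centroid of $D$.

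For the maximum I would invoke convexity once more: a convex function on the convex polytope $D$ attains its maximum at a vertex, so it suffices to evaluate $f(0,0)=1$, $f(1,0)=1$ and $f(0,1)=1$. Alternatively, to avoid the convexity remark entirely, one can simply restrict $f$ to each edge: on $OA$ (where $y=0$), on $OB$ (where $x=0$), and on $AB$ (where $y=1-x$), a direct substitution collapses $f$ to the same one-variable polynomial $3t^{2}-3t+1$ with $0\le t\le 1$, whose values lie in $[1/4,1]$, the value $1$ being attained exactly at the endpoints. Either way, $\max_{D}f=1$, attained at the three vertices $O$, $A$, $B$.

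There is no genuine obstacle here; the statement is an elementary optimisation of a convex quadratic. The only step needing a little care is the bookkeeping when substituting $y=1-x$ along the edge $AB$ to confirm that $f$ again reduces to $3t^{2}-3t+1$; once that is checked, combining the interior value $0$, the edge range $[1/4,1]$, and the vertex value $1$ gives the asserted bounds $0\le f\le 1$ on $D$ with both endpoints achieved.
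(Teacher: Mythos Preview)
Your proof is correct and follows essentially the same route as the paper: locate the unique interior critical point $(1/3,1/3)$ with value $0$, then reduce $f$ on each of the three edges to the one-variable polynomial $3t^{2}-3t+1$ with range $[1/4,1]$, giving the maximum value $1$ at the three vertices. Your additional convexity observation (positive-definite Hessian, hence the maximum must sit at a vertex) is a tidy shortcut the paper does not invoke, but the underlying method is the same.
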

\begin{proof}
Since $f$ is differentiable then the only places where $f$ can assume these values are points inside $D$ where the first partial derivatives satisfy  $f_x=f_y=0,$ and points on the boundary.
\begin{itemize}
\item{Potential points inside $D$:} Solving the system
$$
\left \{
\begin{array}
[c]{l}
6x+3y-3=0 \\
3x+6y-3=0\\
\end{array}
\right .
$$
yields  the unique solution $x=y=\frac{1}{3}$ with $f(\frac{1}{3},\frac{1}{3})=0.$
\item{Potential points on the boundary of $D$:} We have to check the 3 sides of the triangle $OAB$ one side at a time.\\
1. On the segment $[O,A],$ $fx,y)=f(x,0)=3x^2-3x+1$ which can be regarded as a function of $x$ where $0\leq x\leq 1,$ and such that its derivative $f'(x,0)=6x-3=0$ for $x=1/2.$ Therefore we have 3 potential points where their images by $f$ are given by $f(0,0)=1,$ $f(1,0)=1,$ and $f(1/2,0)=1/4.$\\
2. On the segment $[O,B],$ clearly (as $x$ and $y$ play a symmetric role in the function $f(x,y)$) we obtain the following potential points:  $f(0,0)=1,$ $f(0,1)=1,$ and $f(0,1/2)=1/4.$ \\
3. On the segment $[A,B],$ we have already accounted for the values of $f$ at the endpoints of $[A,B],$ so that we only need to look at the interior points of $[A,B].$  Clearly $f(x,1-x)=3x^2-3x+1$ and $f'(x,1-x)=6x-3=0$ for $x=1/2.$ Hence, $(1/2,1/2)$ is the final potential point with $f(1/2,1/2)=1/4.$
\end{itemize}
Thus our claim is valid.
\end{proof}
\begin{remark} The surface $z=f(x,y)$ where $(x,y)\in D$ and any horizontal plane $z=d$ where $0\leq d\leq 1$ intersect
at exactly one point which is $(1/3,1/3)$ for $d=0$ and intersect at the three points $(0,0),$ $(1,0)$ and $(0,1)$ for $d=1.$  Moreover, for $0< d < 1$ they intersect in an infinite number of points (see Figure \ref{fig: my fig2}).
\end{remark}
\begin{figure}
\begin{center}
\includegraphics[scale=0.4]{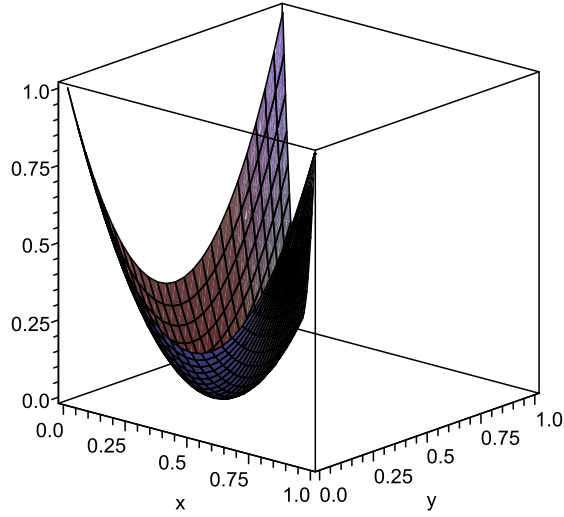}
\caption{{\footnotesize The surface $z=f(x,y)$ over $D.$}}
\label{fig: my fig2}
\end{center}
\end{figure}

As a consequence, we have the following corollary.

\begin{lemma} The only elements of $\Delta_3^s(1)$  that are DS  in  $\Delta_3^s$ are $J_3$ and   $X,$  $Y$ and $Z.$
\end{lemma}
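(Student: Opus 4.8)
The plan is to feed the spectral formula of Lemma 3.2 into the level-set picture of Lemma 3.3 and Remark 3.4. The starting observation is that cospectrality cannot take us out of the triangle $XYZ$: if $A\in\Delta_3^s(1)$ and some $B\in\Delta_3^s$ is similar to $A$, then, both being symmetric, $B$ is cospectral to $A$; since by Lemma 3.2 the spectrum of $A=xX+yY+(1-x-y)Z$ is $\bigl(1,\sqrt{f(x,y)},-\sqrt{f(x,y)}\bigr)$, we get $\mathrm{tr}(B)=\mathrm{tr}(A)=1$, hence $B\in\Delta_3^s(1)$, i.e. $B=x'X+y'Y+(1-x'-y')Z$ for some $(x',y')\in D$. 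Applying Lemma 3.2 to $B$ as well, $A$ and $B$ are cospectral if and only if $f(x,y)=f(x',y')$. Since $X,Y,Z$ are affinely independent (by Lemma 3.1 the triangle $XYZ$ is equilateral, in particular nondegenerate), the barycentric map $(s,t)\mapsto sX+tY+(1-s-t)Z$ is a bijection from $D$ onto $\Delta_3^s(1)$; therefore the matrices of $\Delta_3^s$ similar to $A$ correspond bijectively to the level set $L_d:=\{(s,t)\in D:\ f(s,t)=d\}$, where $d:=f(x,y)$.

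I would then run through the three cases for $d$, which by Lemma 3.3 lies in $[0,1]$. If $d=0$, Remark 3.4 gives $L_0=\{(1/3,1/3)\}$, so $A=J_3$ is the unique element of $\Delta_3^s$ with its spectrum and is (vacuously) DS in $\Delta_3^s$. If $d=1$, Remark 3.4 gives $L_1=\{(0,0),(1,0),(0,1)\}$, corresponding to $Z,X,Y$; these three are permutation matrices carried into one another by coordinate permutations, hence pairwise permutationally similar, so each of $X,Y,Z$ is DS in $\Delta_3^s$. If $0<d<1$, Remark 3.4 tells us $L_d$ is infinite, so $A$ is similar to infinitely many distinct matrices of $\Delta_3^s$; but its permutation similarity class $\{P^TAP:\ P\ \text{a }3\times3\ \text{permutation matrix}\}$ has at most $|S_3|=6$ elements, so some $B\in\Delta_3^s$ is similar but not permutationally similar to $A$, and $A$ is not DS in $\Delta_3^s$. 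Assembling the three cases gives the claim.

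The only step with real content is the case $0<d<1$, and its essential input has already been isolated as Remark 3.4: for $0<d<1$ the level curve of $f$ meets $D$ in infinitely many points (informally, $f$ is a positive-definite quadratic with interior minimum at $(1/3,1/3)$, so its positive level sets are ellipses, which meet the closed triangle $D$ in an arc). Beyond that, one only needs to be careful to couple this with the injectivity of the barycentric parametrization, so that infinitely many parameter values really do yield infinitely many cospectral matrices, and to note that permutation similarity produces only finitely many matrices; the remaining steps are routine bookkeeping with Lemmas 3.2 and 3.3.
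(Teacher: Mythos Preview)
Your argument is correct and is actually cleaner than the paper's. Both proofs rest on Lemma~3.2, Lemma~3.3, and Remark~3.4, but they diverge in how the case $0<d<1$ is handled. The paper splits this range in two: for $M$ off the three medians $[J_3,X]\cup[J_3,Y]\cup[J_3,Z]$ it exhibits the cospectral mate $N=\alpha X+(1-\alpha)J_3$ (with $\alpha=\sqrt{f(x,y)}$) lying on a median, and argues geometrically that $M$ and $N$ cannot be permutationally similar; for $M$ on a median it writes out the entries of $M$ and of a generic $K\in\Delta_3^s(1)$ and solves for a parameter choice on the level curve $f=d^2$ giving $K$ an entry not appearing in $M$. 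You bypass both sub-cases with a single cardinality observation: since $L_d$ is infinite while the permutation-similarity class of any fixed $A$ has at most $|S_3|=6$ members, some cospectral mate must fall outside that class. This is shorter and avoids the entry computations entirely; the only thing it gives up is the explicit description of a cospectral mate that the paper's construction provides. Your treatment of the extreme cases $d=0$ and $d=1$ is also self-contained from Remark~3.4, whereas the paper appeals to Theorem~2.3 and Corollary~2.9 for those.
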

\begin{proof} First note that the line segments $[J_3,X],$ $[J_3,Y]$ and $[J_3,Z]$ are permutationally similar as $X,$ $Y$ and $Z$ are and any point outside these line segments can not be permutationally similar to a point on them (from the geometry of the triangle $XYZ$). Let $M$ be any point in $\Delta_3^s(1)$ and consider the following two cases:
\begin{itemize}
 \item If  $ M\in \Delta_3^s(1)- [J_3,X]\cup [J_3,Y]\cup [J_3,Z]$ then  $M=xX+yY+(1-x-y)Z$  for some $(x,y)\in D.$  Define  $\alpha= \sqrt{f(x,y)}$ where $(x,y)$ varies over the domain $D.$ Then by the preceding lemma, $0\leq \alpha \leq 1.$  Moreover, it is easy to check that the matrix $N$ given by $N=\alpha X+(1-\alpha)J_n$ has eigenvalues $(1,\alpha,-\alpha).$ So that by Lemma 3.2 the two symmetric doubly stochastic matrices matrices $M$ and $N$
  have the same spectrum. Since we are dealing with symmetric matrices, then they are similar. Thus in this case $M$ is not DS in  $\Delta_3^s.$\\
 \item For the case where $M$ is in $ [J_3,X]\cup [J_3,Y]\cup [J_3,Z]-\{J_3, X, Y, Z\},$ without loss of generality let $M=d X+(1-d)J_3$ for some $0<d <1.$  We want to show that there there exists $K\in \Delta_3^s(1) $ which is similar to $M$ but not permutationally similar. For, let $K=x X+yY+(1-x-y)Z$ where $(x,y)$ varies over $D.$ First the condition on $(x,y)\in D$ in terms of $d$ for which $M$ and $K$ are similar is given by $d=\sqrt{f(x,y)}.$ Such $(x,y)$ always exists due to the continuity of $f(x,y)$ in $D.$  Also we want to impose the other constraint that  at least one entry of $M$ is not an entry of $K$ or vice versa so that they are not permutationally similar. Clearly $M=\left ( \begin{array}{ccc} 1/3+2/3d&1/3-1/3d&1/3-1/3d\\1/3-1/3d&1/3-1/3d&1/3+2/3d\\1/3-1/3d&1/3+2/3d&1/3-1/3d\\ \end{array} \right )$ and
   $K=\left ( \begin{array}{ccc} x&1-x-y&y\\1-x-y&y&x\\y&x&1-x-y\\ \end{array} \right ),$
   and since $M$ has at most two distinct entries which are $1/3+2/3d$ and $1/3-1/3d$ so that we need to impose the constraint that  $x\neq 1/3+2/3d$ and $x\neq 1/3-1/3d.$  An inspection shows that $x=1/3+2/3d$ or $x=1/3-1/3d$ if and only if  $(3x-1)^2=4f(x,y)$ or $(3x-1)^2=f(x,y)$ if and only if $(x-(2y-1))^2=0$ or $(x-y)(2x+y-1)=0.$ So that our second constraint amounts to $x$ not being an element of $\{y, 2y-1, (1-y)/2\}.$ Thus we only need to exclude these 3 particular values of $x$ and since $0<d <1$ then by Remark 3.4, an infinite number of such $x$ exists (since each of the 3 planes $x=y,$ $x=2y-1$ and $x=(1-y)/2$ intersects the curve $d=\sqrt{f(x,y)}$ in a finite number of points) so that we conclude that $M$ is not DS in $\Delta_3^s.$
   \end{itemize}
   Finally, $X,$ $Y$ and $Z$ are DS by Theorem 2.3 and $J_3$ is DS by Corollary 2.9.
   \end{proof}

For $1\leq a\leq 3,$ let $\Delta_3^s(a)$ intersect $[I_3,J_3],$ $[I_3,X],$ $[I_3,Y]$ and $[I_3,Z]$ in $D_a,$  $X_a,$ $Y_a$ and $Z_a$ respectively (for $0\leq a\leq 1,$  we only need to replace $I_3$ by $C_3,$ in this statement).
 Then clearly $\Delta_3^s(a)$ is the closed triangle $X_aY_aZ_a$ and the 3 vertices  $X_a,$ $Y_a,$ and $Z_a$  are permutationally similar since $X,$ $Y,$ and $Z$ are.
With these notations, we have the following.
\begin{lemma} The only points of $\Delta_3^s(a)$ that are DS in $\Delta_3^s$ are $\{ D_a, X_a, Y_a, Z_a\}.$
\end{lemma}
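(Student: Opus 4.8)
The plan is to reduce the statement to Lemma 3.5 by showing that every trace slice $\Delta_3^s(a)$ is an affine (in fact homothetic) copy of the trace-$1$ slice $\Delta_3^s(1)$ — the triangle $XYZ$ — in a way compatible both with spectra and with permutation similarity. The useful preliminary remark is that, since similar matrices share their trace, a matrix $A\in\Delta_3^s(a)$ is DS in $\Delta_3^s$ if and only if every $B\in\Delta_3^s(a)$ that is similar to $A$ is permutationally similar to $A$; thus the DS property of $A$ only involves the single slice $\Delta_3^s(a)$.

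First dispose of the degenerate endpoints: $\Delta_3^s(3)=\{I_3\}$ with $D_3=X_3=Y_3=Z_3=I_3$, and $\Delta_3^s(0)=\{C_3\}$ with $D_0=X_0=Y_0=Z_0=C_3$, so the claim is trivial for $a\in\{0,3\}$. Assume now $0<a<3$ and define the invertible affine map
$$\Phi_a(M)=\tfrac{a-1}{2}I_3+\tfrac{3-a}{2}M\quad(1\le a<3),\qquad \Phi_a(M)=(1-a)C_3+aM\quad(0<a\le1),$$
which reduces to the identity for $a=1$. Since $X_a,Y_a,Z_a,D_a$ are, by definition, the trace-$a$ points of $[I_3,X],[I_3,Y],[I_3,Z],[I_3,J_3]$ (and of the analogous segments issuing from $C_3$ when $a\le1$), we have $\Phi_a(X)=X_a$, $\Phi_a(Y)=Y_a$, $\Phi_a(Z)=Z_a$ and $\Phi_a(J_3)=D_a$; as $\Phi_a$ carries the vertices of the triangle $\Delta_3^s(1)=XYZ$ to those of the triangle $\Delta_3^s(a)=X_aY_aZ_a$, it restricts to a bijection of $\Delta_3^s(1)$ onto $\Delta_3^s(a)$. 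Moreover $C_3$ (like $I_3$) is fixed by every permutation conjugation, so $\Phi_a(P^TMP)=P^T\Phi_a(M)P$; hence, $\Phi_a$ being injective, $M$ and $N$ in $\Delta_3^s(1)$ are permutationally similar if and only if $\Phi_a(M)$ and $\Phi_a(N)$ are.

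Next, $\Phi_a$ matches up spectra. By Lemma 3.2 every element $M$ of $\Delta_3^s(1)$ has spectrum $(1,s,-s)$ for some $s=\sqrt{f(x,y)}\in[0,1]$, with $e_3$ the Perron eigenvector. By Lemma 3.1, $C_3$ commutes with every element of $\Delta_3^s$ and acts on $e_3^\perp$ as the scalar $-\tfrac12$; so (and trivially when $1\le a$, via $I_3$) $\Phi_a(M)$ has spectrum $\bigl(1,\tfrac{a-1}{2}+cs,\tfrac{a-1}{2}-cs\bigr)$, where $c=\tfrac{3-a}{2}$ for $1\le a<3$ and $c=a$ for $0<a\le1$. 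Since $c\neq0$, this spectrum determines $s$, so $\Phi_a$ induces a bijection between the spectra occurring in $\Delta_3^s(1)$ and those occurring in $\Delta_3^s(a)$; in particular two elements of $\Delta_3^s(1)$ are cospectral — equivalently, being symmetric, similar — if and only if their $\Phi_a$-images are. Combining this with the previous paragraph and the intrinsic reformulation of the DS property, $M\in\Delta_3^s(1)$ is DS in $\Delta_3^s$ if and only if $\Phi_a(M)$ is. By Lemma 3.5 the DS elements of $\Delta_3^s(1)$ are precisely $J_3,X,Y,Z$, hence those of $\Delta_3^s(a)$ are precisely $\Phi_a(J_3)=D_a$, $\Phi_a(X)=X_a$, $\Phi_a(Y)=Y_a$, $\Phi_a(Z)=Z_a$, as claimed.

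The main point to be careful about is, rather than a real obstacle, bookkeeping: the degenerate endpoints $a\in\{0,3\}$ (where $\Phi_a$ degenerates) must be set aside, and for $0<a<1$ one must use Lemma 3.1 — since then $\Phi_a(M)=(1-a)C_3+aM$ is not merely a scalar shift of $M$ — to know that $C_3$ and $M$ are simultaneously diagonalizable and so read off the spectrum of $\Phi_a(M)$. Should one prefer to avoid $\Phi_a$, the alternative is to re-run the two-case scheme of Lemma 3.5 directly: write $M=xX_a+yY_a+(1-x-y)Z_a$, compute its spectrum from Lemmas 3.2--3.3 as $\bigl(1,\tfrac{a-1}{2}+c\sqrt{f(x,y)},\tfrac{a-1}{2}-c\sqrt{f(x,y)}\bigr)$, and match $M$ either against a point of $[D_a,X_a]$ (when $M$ lies off the three medians of the triangle) or against another point of $\Delta_3^s(a)$ (when $M$ lies on a median but is neither $D_a$ nor a vertex); the only mildly delicate step is then the entry comparison ensuring the cospectral partner is not a permutation conjugate, which is the one in Lemma 3.5 transported through the affine rescaling.
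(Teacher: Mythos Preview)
Your proposal is correct and follows essentially the same approach as the paper: both proofs reduce the trace-$a$ slice to the trace-$1$ slice via the affine homothety centered at $I_3$ (for $a\ge 1$) or $C_3$ (for $a\le 1$), and then invoke Lemma~3.5. The paper projects each $M_a$ along the line through $I_3$ (resp.\ $C_3$) to a point $M\in\Delta_3^s(1)$ and lifts back a cospectral non-permutationally-similar partner $N$; your map $\Phi_a$ is precisely the inverse of that projection, and your explicit spectral computation supplies the justification for the paper's ``clearly $M_a$ and $N_a$ are similar but not permutationally similar.'' The only minor difference is that the paper appeals to Theorem~2.10 for $D_a$ being DS, whereas you deduce it uniformly from the bijection.
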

\begin{proof} For $1\leq a\leq 3,$ let $M_a$ be any point in $\Delta_3^s(a)-\{ D_a, X_a, Y_a, Z_a\},$ and let the line through $I_3$ (resp. $C_3$ for $0\leq a\leq 1$) and $M_a$  intersect $\Delta_3^s(1)$ in $M.$ Clearly $M$ is in $\Delta_3^s(1)-\{ J_3, X, Y, Z\}$ and $M$ is not DS by the preceding lemma. Then there exists $N$ in $\Delta_3^s(1)-\{ J_3, X, Y, Z\}$ such that $N$ and $M$ are similar but not permutationally similar. Let $N_a$ be the intersection of $[I_3,N]$ (resp. $[C_3,N]$ for $0\leq a\leq 1$) with $\Delta_3^s(a),$ then clearly $M_a$ and $N_a$ are similar but not permutationally similar.

If $M_a= D_a,$ then $D_a$ is DS in $\Delta_3^s.$  Now if  $M_a\in \{ X_a, Y_a, Z_a\},$ then it is enough to study the case where $M_a= X_a.$ If there exists
$N_a\in \Delta_3^s(a)-\{ D_a, X_a, Y_a, Z_a\},$ such that $X_a$ and $N_a$ are similar but not permutationally similar, then there exists $N\in \Delta_3^s(1)-\{ J_3, X, Y, Z\}$ such that $X$ and $N$ are similar but not permutationally similar which is a contradiction to $X$ being DS in $\Delta_3^s.$
\end{proof}

From the preceding 3 lemmas, we conclude one of our main results which completely solves Problem 1.4 in the case $n=3.$
\begin{theorem} The only symmetric doubly stochastic matrices that are DS in $\Delta_3^s$  are those lying on one of the following line-segments $[I_3,X],$ $[I_3,Y],$ $[I_3,Z],$ $[C_3,X],$ $[C_3,Y],$ $[C_3,Z],$  or $[I_3,C_3].$
\end{theorem}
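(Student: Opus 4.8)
The plan is to assemble the final classification from the three preceding lemmas (Lemma 3.5, Lemma 3.6) together with the trace-stratification of the polytope $\Delta_3^s$ described just before Lemma 3.6. The key structural fact is that $\Delta_3^s$ is foliated by the triangles $\Delta_3^s(a)$ for $0\le a\le 3$, and that the axis $[I_3,C_3]$ meets $\Delta_3^s(a)$ in the single point $D_a$ (which is $J_3$ when $a=1$), while the three ``spokes'' $[I_3,X]\cup[I_3,Y]\cup[I_3,Z]$ and $[C_3,X]\cup[C_3,Y]\cup[C_3,Z]$ meet $\Delta_3^s(a)$ exactly in $X_a,Y_a,Z_a$. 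First I would record that every symmetric doubly stochastic matrix of order $3$ lies in exactly one $\Delta_3^s(a)$, so the global classification reduces to classifying, for each fixed $a$, which points of $\Delta_3^s(a)$ are DS in $\Delta_3^s$; this is precisely the content of Lemma 3.6, which gives the answer $\{D_a,X_a,Y_a,Z_a\}$.

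Next I would take the union over $a$. As $a$ ranges over $[0,3]$, the point $D_a$ traces out exactly the segment $[I_3,C_3]$ (recall $J_3$ lies on it, and $D_a$ is defined as the intersection of this axis with $\Delta_3^s(a)$). Similarly $X_a$ traces out $[C_3,X]\cup[I_3,X]$: for $1\le a\le 3$ it sweeps $[I_3,X]$, and for $0\le a\le 1$ it sweeps $[C_3,X]$, with the two halves meeting at $X_1=X$ (note $X$ itself is the vertex $\frac12(P+P^T)$ with $P$ a transposition, a permutation matrix, so it has trace $1$). The same applies to $Y_a$ and $Z_a$. Hence the union of all the DS points over all trace values is exactly the seven segments listed, and conversely every point on one of those seven segments lies in some $\Delta_3^s(a)$ and is one of $D_a,X_a,Y_a,Z_a$, hence is DS in $\Delta_3^s$ by the final sentence of the proof of Lemma 3.6 (which invokes Theorem 2.3 for the vertices and Corollary 2.9 / Theorem 2.8 for the axis). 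One small point to verify cleanly: the three spoke-segments $[I_3,X],[I_3,Y],[I_3,Z]$ share only the endpoint $I_3$ (and similarly $[C_3,X],[C_3,Y],[C_3,Z]$ share only $C_3$), while $[I_3,C_3]$ meets each spoke only at $I_3$ or $C_3$; this follows from the geometry of the equilateral configuration in Lemma 3.1 and is needed so that the description of the union as ``those lying on one of the seven segments'' is unambiguous and consistent with the per-slice answer.

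I do not expect a genuine obstacle here: the theorem is a packaging of Lemma 3.6 plus the observation that the parametrized families $\{D_a\}$, $\{X_a\}$, $\{Y_a\}$, $\{Z_a\}$ foliate precisely the seven named segments. The only mildly delicate points are bookkeeping ones: (i) confirming that the labels $X_a,Y_a,Z_a$ really do sweep the full closed segments $[C_3,X]\cup[I_3,X]$ etc.\ with no gaps and no double-counting, using that the map $a\mapsto \Delta_3^s(a)$ is a continuous monotone sweep from $C_3$ (at $a=0$) through $J_3$ (at $a=1$) to $I_3$ (at $a=3$); and (ii) making sure the endpoints $I_3$, $C_3$, $X$, $Y$, $Z$, $J_3$ are correctly attributed — $I_3$ is DS trivially, $C_3$ and $J_3$ by Corollary 2.9, and $X,Y,Z$ by Theorem 2.3. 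So the proof is essentially: ``By Lemma 3.6, the DS points in each slice $\Delta_3^s(a)$ are exactly $D_a,X_a,Y_a,Z_a$; taking the union over $a\in[0,3]$ and using the definitions of these four points as the intersections of $\Delta_3^s(a)$ with the axis $[I_3,C_3]$ and the six spokes, we obtain exactly the seven segments listed.''
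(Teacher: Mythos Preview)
Your proposal is correct and matches the paper's approach exactly: the paper's proof of this theorem is the single sentence ``From the preceding 3 lemmas, we conclude one of our main results,'' and you have simply spelled out how Lemma 3.6's per-slice classification $\{D_a,X_a,Y_a,Z_a\}$ unions over $a\in[0,3]$ to the seven listed segments. (One minor slip: the axis result you cite as ``Theorem 2.8'' is Theorem 2.10 in the paper's numbering.)
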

As a conclusion, we solve Problem 1.3 for the case $n=3.$
\begin{corollary} The only points of $\mathbb{R}^3$  that  characterize permutationally  elements of $\Delta_3^s$ are those belonging to $[(1,1,1),(1,1,-1)]\cup[(1,-1/2,-1/2),(1,1,-1)]\cup[(1,1,1),(1,-1/2,-1/2)].$
\end{corollary}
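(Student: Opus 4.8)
The plan is to reduce Corollary~3.9 to Theorem~3.8 via the elementary observation that, for symmetric matrices, ``similar'' is synonymous with ``cospectral''. First I would record the equivalence: a matrix $A\in\Delta_3^s$ is permutationally characterized by its spectrum if and only if $A$ is DS in $\Delta_3^s$. One direction is the observation already made in Section~1; for the converse, if $A$ is DS in $\Delta_3^s$ and $B\in\Delta_3^s$ is cospectral with $A$, then $B$ and $A$ are similar (both being symmetric), hence permutationally similar, so $A$ is permutationally characterized by its spectrum (and in particular such a matrix exists). It follows that the set of points of $\mathbb{R}^3$ which characterize permutationally some element of $\Delta_3^s$ is exactly the set of spectra of matrices that are DS in $\Delta_3^s$, and by Theorem~3.8 this equals the union of the spectra of the points lying on the seven line-segments $[I_3,X]$, $[I_3,Y]$, $[I_3,Z]$, $[C_3,X]$, $[C_3,Y]$, $[C_3,Z]$, $[I_3,C_3]$.

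It then remains to compute three one-parameter families of spectra, the segments through $X$, $Y$, $Z$ giving in each case the same family since $X$, $Y$, $Z$ are permutationally similar and hence cospectral. For $[I_3,C_3]$, note that $C_3=\frac32 J_3-\frac12 I_3$ has spectrum $(1,-\frac12,-\frac12)$ while $I_3$ has spectrum $(1,1,1)$, and the two commute by Lemma~3.1; a convex combination therefore has spectrum $(1,t,t)$ with $t$ sweeping $[-\frac12,1]$, i.e. the segment $[(1,1,1),(1,-\frac12,-\frac12)]$. For $[I_3,X]$, since $I_3$ commutes with every matrix and $X$ has spectrum $(1,1,-1)$, the combination $sI_3+(1-s)X$ has spectrum $(1,1,2s-1)$ with $2s-1$ sweeping $[-1,1]$, i.e. the segment $[(1,1,1),(1,1,-1)]$.

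The only step requiring a little care is $[C_3,X]$, where one must identify the correct pairing of eigenvalues under simultaneous diagonalization (these matrices commute, again by Lemma~3.1). I would exhibit the common orthogonal eigenbasis $\{(1,1,1)^T,(2,-1,-1)^T,(0,1,-1)^T\}$: the first vector is the Perron eigenvector with eigenvalue $1$ for both; $(2,-1,-1)^T$ is fixed by $X$ and, being orthogonal to $(1,1,1)^T$, lies in the $(-\frac12)$-eigenspace of $C_3$; and $(0,1,-1)^T$ is the $(-1)$-eigenvector of $X$ and again lies in the $(-\frac12)$-eigenspace of $C_3$. Hence $tC_3+(1-t)X$ has spectrum $\bigl(1,\,1-\tfrac{3t}{2},\,-1+\tfrac{t}{2}\bigr)$ for $t\in[0,1]$, which is precisely the segment from $(1,1,-1)$ to $(1,-\frac12,-\frac12)$. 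Taking the union of the three families yields exactly $[(1,1,1),(1,1,-1)]\cup[(1,-1/2,-1/2),(1,1,-1)]\cup[(1,1,1),(1,-1/2,-1/2)]$, as claimed. The main obstacle is thus purely one of bookkeeping---tracking the eigenvalue pairing in the $[C_3,X]$ case---since the substantive content is entirely carried by Theorem~3.8.
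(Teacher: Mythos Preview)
Your proposal is correct and follows essentially the same route as the paper: reduce to Theorem~3.8 and compute the spectra of the seven line-segments, noting that the segments through $X$, $Y$, $Z$ give identical spectra by permutational similarity. The paper's version is terser---it simply asserts the spectrum computations and delegates the $[I_3,C_3]$ case to Corollary~2.11---whereas you make the underlying equivalence between ``DS in $\Delta_3^s$'' and ``permutationally characterized'' explicit and carry out the simultaneous diagonalization for $[C_3,X]$ by hand, but the substance is the same.
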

\begin{proof} It is enough to check that the spectrum of any of the 3 line-segments $[I_3,X],$ $[I_3,Y],$ $[I_3,Z]$ is $[(1,1,1),(1,1,-1)],$ and the spectrum of any of $[C_3,X],$ $[C_3,Y],$ $[C_3,Z]$ is $[(1,-1/2,-1/2),(1,1,-1)].$ The last part is true by Corollary 2.11.
\end{proof}

\section{ Connections with spectral graph theory}

In this section, we present some close connections between Problem 1.4 and the topic known  ``regular graphs that are DS'' (see, e.g.,~\cite{br,cv,va,van}).
  First let us introduce some related notations (see, e.g. \cite{ba}). The adjacency matrix of a simple
graph $G$ will be denoted by $A(G)$ which is a symmetric nonnegative (0,1)-matrix and its eigenvalues
$\lambda_1,...\lambda_n$ form the spectrum of $G$ which is a {\em multiset} and will be
denoted by $\sigma(G).$ The graph $G$ is called {\it integral} if all of its eigenvalues are integers, and it is called {\it circulant} if $A(G)$ is circulant. In addition, $G$ is said to be $k$-{\it regular} if the degree of each of its vertices is $k.$  A  {\it  strongly regular} graph $G$ with parameters $(v,k,\lambda,\mu)$ is a $k$-regular graph which is not complete nor edgeless and satisfying the following two conditions:\\
(i) For each pair of adjacent vertices there exist $\lambda$ vertices adjacent to both.\\
(ii) For each pair of non-adjacent vertices there exist $\mu$ vertices adjacent to both.

We use the usual notation $K_n$ to denote the {\it complete} graph on $n$ vertices where each vertex is connected to all other vertices. Moreover, the {\it complete bipartite} graph $K_{n_1,n_2}$ has vertices partitioned into two
subsets $V_1$ and $V_2$ of $n_1, n_2$ elements each, and two vertices are
adjacent if and if only if one is in $V_1$ and the other is in $V_2.$

 Two graphs are said to be {\it isomorphic} if and only if their adjacency matrices are permutationally similar. Two graphs  are said to be {\it cospectral or isospectral} if they have the same spectrum. A graph $G$ is said to be DS if any graph $H$ which is cospectral to $G$ is isomorphic to $G.$ In general, the problem of determining whether a graph $G$ is DS or not is still open though many partial results are known (see \cite{van} for the latest developments on this problem).

  We are particularly interested in the subproblem of finding which regular graphs are DS due to its link with Problem 1.4. To explain this, we need some more notations. But first recall that if $G$ is  a $k$-regular graph with $n$ vertices, then the spectral radius of $A(G)$ equals $k$ and it is an eigenvalue of $G$ with corresponding unit eigenvector equals to $e_{n}.$
    Now let $\Omega_n^s(k)$ be the set of all $n\times n$ nonnegative symmetric matrices with each row and each column equals to $k,$  and   $\Lambda_n(k)$ denote the subset of $\Omega_n^s(k)$ formed by of all (0,1)-matrices with $k$ 1's in each row and each column. In addition, let  $\Lambda_n^0(k)$ be the set of those elements of $\Lambda_n(k)$ that have  zero trace. Then clearly $A\in \Lambda_n^0(k)$ if and only if $A$ is the adjacency matrix of some $k$-regular graph with $n$ vertices and $k$ edges.  Also note that if $A$ is in $\Omega_n^s(k)$ if and only if $\frac{1}{k}A$ is an element of $\Delta_n^s.$ So that if we extend the notion of DS to all elements of $\Omega_n^s(k),$  then obviously $A\in \Lambda_n(k)$ is DS in $\Omega_n^s(k)$ if and only if  $\frac{1}{k}A$ is DS in $\Delta_n^s.$ Also, note that a $k$-regular graph $G$ is DS if and only if $A(G)$ is DS in $\Lambda_n^0(k)$

 It is well-known that 1-regular graphs are DS (see \cite{cv}); a fact that can be easily  derived from Theorem 2.3. In addition, the fact that the complete graph $K_n$ is DS can be seen from Corollary 2.9 since $\frac{1}{k}A(K_n)=C_n.$  On the one hand, a disjoint union of complete graphs is DS; a fact that can be deduced from Corollary 2.13, and on the other hand, $K_{n,n}$ is DS by Theorem 2.14.

  Although proving that graphs are DS is a much more harder task than just showing they are not DS and the same is true for Problem 1.4, one can benefit from the fact that cospectral regular graphs that are not isomorphic (i.e. cospectral mates) give rise to symmetric doubly stochastic matrices that are not DS in $\Delta_n^s.$ So that all known results concerning finding cospectral mates for regular graphs can lead to exclude elements from $\Delta_n^s(0)$ as solutions to Problem 1.4. In what follows,  we mention among the many such situations, 3 particular examples (see \cite{br} for other situations). The first is concerned with strongly regular graphs  where
 it is well known that connected strongly regular graphs with parameters $(v,k,\lambda,\mu)$ have eigenvalue k appearing once and two other eigenvalues with prescribed multiplicity. In general there are many non-isomorphic graphs for a fixed parameter and the number of non-isomorphic graphs can grow dramatically (see e.g. \cite{br}).
  The second deals with cospectral integral regular graphs where for example in \cite{wa}(see also the references within) the authors prove the existence of infinitely many pairs of cospectral integral graphs which results in the existence of infinitely many pairs of symmetric doubly stochastic matrices that are not DS in $\Delta_n^s.$ The final case is concerned with circulant graphs (which are regular)  where in \cite{ju} it is proved that there are infinitely many cospectral non-isomorphic circulant graphs.

\section{Two related open questions and a conjecture}

We conclude this paper with the following two open questions  for which the answer to any of them can help shed some light on Problem 1.4 for general $n.$\\
(1) If $G$ is a $k$-regular graph that is DS. Does this imply that $\frac{1}{k}A(G)$ is DS in $\Delta_n^s$?\\
 Note that as mentioned earlier this is true for 1-regular graphs, disjoint union of complete graphs, the graphs $K_n$  and $K_{n,n}.$\\
(2) What are the elements of $\Delta_n^s(0)$  that are DS in $\Delta_n^s$? \\
we know that $C_n$ and the zero trace $n\times n$ permutation matrices are among these ones.

Finally,  based on the solution for the case $n=3,$ we propose the following conjecture.
\begin{conjecture} For $0< a \leq n,$ the only elements of $\Delta_n^s(a)$ that are DS in $\Delta_n^s$ are points on the line segments $[I_n,C_n],$ $[I_n,P]$ and $[C_n,P]$ where $P$ is a vertex of $\Delta_n^s.$
\end{conjecture}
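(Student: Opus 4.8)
For general $n$ the conjecture remains open; we indicate the line of attack that we find most promising, which follows the pattern of Section 3 but replaces the explicit eigenvalue computation of Lemma 3.2 by a dimension count.

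The first point is that being DS in $\Delta_n^s$ is a property internal to a trace level: since similar matrices have equal trace, $M\in\Delta_n^s(a)$ is DS in $\Delta_n^s$ exactly when the full orthogonal orbit $\mathcal O(M)=\{UMU^T:U^TU=I_n\}$ meets $\Delta_n^s$ only in the (finite) permutation orbit of $M$. If the distinct eigenvalues of $M$ have multiplicities $m_1,\dots,m_r$ then $\dim\mathcal O(M)=\binom n2-\sum_i\binom{m_i}2$, while the affine hull of $\Delta_n^s$ — the symmetric $W$ with $We_n=e_n$ — has codimension $n$ in the space of symmetric matrices. Writing $F_M$ for the smallest face of $\Delta_n^s$ containing $M$, so that $M$ lies in its relative interior and a neighbourhood of $M$ in the affine hull of $F_M$ is contained in $\Delta_n^s$, the local intersection inequality gives
$$\dim\bigl(\mathcal O(M)\cap\operatorname{aff}F_M\bigr)\ \ge\ \dim\mathcal O(M)+\dim F_M-\tbinom{n+1}{2}.$$
Whenever the right-hand side is positive, there is a non-constant arc through $M$ of symmetric matrices that are cospectral with $M$ and lie in $\Delta_n^s$; as the permutation orbit of $M$ is finite, some point of this arc is similar but not permutationally similar to $M$, so $M$ is not DS. This inequality is the engine of the argument.

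For $M$ in the interior of $\Delta_n^s$ (that is, $F_M=\Delta_n^s$, equivalently $M$ entrywise positive) the inequality forces $\dim\mathcal O(M)\le n$; a short count with $\sum_i m_i=n$, supplemented by positivity (which disposes of the sole borderline case, two eigenvalues of multiplicity $2$ at $n=4$), shows that $M$ has exactly two distinct eigenvalues, one of which is the \emph{simple} eigenvalue $1$ with eigenvector $e_n$. Indeed, if $1$ were the repeated eigenvalue, the remaining eigenspace would be a line $\mathbb R v$ with $v\perp e_n$ and $M=I_n-(1-\nu)vv^T$, which cannot be entrywise positive. Hence $M=\mu I_n+(1-\mu)J_n$ with $-\tfrac1{n-1}<\mu<1$, i.e.\ $M$ lies on the open segment $[I_n,C_n]$; and conversely these matrices are DS by Theorem 2.10. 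So the DS matrices in the interior of $\Delta_n^s$ are precisely the interior points of $[I_n,C_n]$.

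What remains — matrices on proper faces of $\Delta_n^s$ — is where the real obstacle lies. A reducible $M$ should be broken up, via Lemmas 2.1 and 2.2, into irreducible doubly stochastic summands and handled inductively, keeping in mind that DS-ness is not preserved by direct sums (the cospectral but non-permutationally-similar pair $J_2\oplus J_{2k-2}$ and $J_k\oplus J_k$), so that coincidences among the spectra of the summands must also be tracked. For irreducible $M$ on a proper face, the forced zeros make $\operatorname{codim}F_M$ large, the displayed inequality becomes restrictive, and again only a few distinct eigenvalues survive; the crux is then to prove that an irreducible, spectrally almost‑rigid doubly stochastic matrix of positive trace with a prescribed zero pattern must be of the form $tI_n+(1-t)P$ or $tC_n+(1-t)P$ for a vertex $P$. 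This is genuinely hard: the facial structure of $\Delta_n^s$ and the way orthogonal orbits meet its faces are poorly understood, and deciding which spectra are realized inside a given face amounts essentially to the (open) symmetric doubly stochastic inverse eigenvalue problem; small values of $n$ for which the count is not strict would have to be settled by hand, as in Section 3. Finally, the hypothesis $a>0$ cannot be dropped: the slice $\Delta_n^s(0)$ is precisely where the cospectral regular graphs of Section 4 live — cf.\ open question (2) — and there is no reason to expect its DS elements to lie only on the listed segments.
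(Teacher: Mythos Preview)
The statement you are addressing is labelled \emph{conjecture} in the paper and is left entirely unproved there; it is proposed in Section~5 on the basis of the $n=3$ analysis and nothing more. So there is no ``paper's own proof'' to compare against, and you are right to begin by saying the conjecture is open and to offer only a strategy.

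On the substance of your strategy: the conclusion you reach for interior points---that a DS matrix with all entries positive must lie on $[I_n,C_n]$---is correct, but the tool you invoke is not. The ``local intersection inequality''
\[
\dim\bigl(\mathcal O(M)\cap\operatorname{aff}F_M\bigr)\ \ge\ \dim\mathcal O(M)+\dim F_M-\tbinom{n+1}{2}
\]
is a codimension estimate that is valid for complex varieties but \emph{fails over the reals}: a real isospectral orbit and a real affine subspace can meet in an isolated point even when the displayed right-hand side is positive. What actually makes the interior case go through is more concrete. The stabiliser of $e_n$ in $O(n)$ is a copy of $O(n-1)$ acting on $e_n^{\perp}$; for any $U$ in this stabiliser, $UMU^T$ is again symmetric doubly quasi-stochastic with the same spectrum, and if $M$ is interior then all nearby points of this $O(n-1)$-orbit lie in $\Delta_n^s$. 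This orbit has dimension $\binom{n-1}{2}-\sum_{i\ge2}\binom{m_i}{2}$, computed from the multiplicities of the eigenvalues on $e_n^{\perp}$, and it is zero-dimensional precisely when $\lambda_2=\cdots=\lambda_n$, i.e.\ when $M=\mu I_n+(1-\mu)J_n$. That is the argument you want; it bypasses the transversality issue entirely, and it also removes the need for your separate treatment of the ``borderline'' $(2,2)$ case at $n=4$ and of the possibility that $1$ is the repeated eigenvalue.

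For the boundary faces you are candid that the program is incomplete, and that is the honest position: the facial combinatorics of $\Delta_n^s$ together with the spectral rigidity you would need are exactly the unresolved ingredients, and the paper itself flags the trace-zero slice (open question~(2)) as the place where graph-theoretic counterexamples already show the picture is delicate.
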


\section*{Acknowledgments}
This work is supported by the Lebanese University research grants program for the Discrete Mathematics and Algebra research group.

\end{document}